\newcommand{\C}{\mathbb{C}}
\newcommand{\R}{\mathbb{R}}
\newcommand{\Z}{\mathbb{Z}}
\newtheorem{Theorem}{Theorem}[section]
\newtheorem{Definition}[Theorem]{Definition}
\newtheorem{Assumption}[Theorem]{Assumption}
\newtheorem{Lemma}[Theorem]{Lemma}
\newtheorem{Proposition}[Theorem]{Proposition}
\newtheorem{Remark}[Theorem]{Remark}
\newtheorem{Example}[Theorem]{Example}
\DeclareMathOperator{\image}{Im}
\DeclareMathOperator{\codim}{codim}
\DeclareMathOperator{\Hom}{Hom}
\newcommand{\A}{\mathcal{A}}
\newcommand{\ch}{\mathsf{ch}}
\newcommand{\bC}{\ensuremath{\mathbb{C}}}
\newcommand{\bF}{\ensuremath{\mathbb{F}}}
\newcommand{\bK}{\ensuremath{\mathbb{K}}}
\newcommand{\bP}{\ensuremath{\mathbb{P}}}
\newcommand{\bQ}{\ensuremath{\mathbb{Q}}}
\newcommand{\bR}{\ensuremath{\mathbb{R}}}
\newcommand{\bT}{\ensuremath{\mathbb{T}}}
\newcommand{\bZ}{\ensuremath{\mathbb{Z}}}
\newcommand{\scA}{\ensuremath{\mathcal{A}}}
\newcommand{\scC}{\ensuremath{\mathcal{C}}}
\newcommand{\scF}{\ensuremath{\mathcal{F}}}
\newcommand{\scL}{\ensuremath{\mathcal{L}}}
\newcommand{\scS}{\ensuremath{\mathcal{S}}}
\newcommand{\scV}{\ensuremath{\mathcal{V}}}
\newcommand{\Sep}{\operatorname{Sep}}
\newcommand{\Ker}{\operatorname{Ker}}
\newcommand{\RB}{\operatorname{RB}}
\DeclareMathOperator{\rank}{rank}
\newcommand{\dec}{\mathsf{d}}
\begin{document}

\title[4-nets]{Resonant bands, Aomoto complex, and real $4$-nets}

\begin{abstract}
The resonant band is a useful notion for the computation of 
the nontrivial monodromy eigenspaces of the Milnor fiber of 
a real line arrangement. In this article, 
we develop the resonant band description for the cohomology 
of the Aomoto complex. As an application, we prove that real 
$4$-nets do not exist. 
\end{abstract}

\author{Michele Torielli}
\address{Michele Torielli, Department of Mathematics, Hokkaido University, Kita 10, Nishi 8, Kita-Ku, Sapporo 060-0810, Japan.}
\email{torielli@math.sci.hokudai.ac.jp}

\author{Masahiko Yoshinaga}
\address{Masahiko Yoshinaga, Department of Mathematics, Hokkaido University, Kita 10, Nishi 8, Kita-Ku, Sapporo 060-0810, Japan.}
\email{yoshinaga@math.sci.hokudai.ac.jp}


\date{\today}
\maketitle

\tableofcontents

\section{Introduction}

Combinatorial decisions of topological invariants are 
the central problems in the theory of hyperplane arrangements. 
Milnor fibers and their eigenspace decompositions have 
received a lot of attention and have been studied 
by diverse techniques (\cite{suc-mil}) (e.g., 
Alexander polynomials, 
Hodge theory, 
nets and multinets, covering spaces, Salvetti complexes, 
characteristic and resonance varieties etc.) 
Among others, the authors follow the previous studies 
using real structures, (\cite{williams,yos-mil,yos-res}) and 
Aomoto complex over finite fields, 
\cite{coh-orl, gra,mac-pap, ps-sp}. 

Concerning the relation between Milnor fibers and 
Aomoto complexes, the two key results were obtained 
by Papadima and Suciu \cite{ps-sp, ps-mod}. 
\[
\mbox{Monodromy eigenspaces}
\stackrel{\mbox{(1)}}{\longleftrightarrow}
\mbox{Aomoto complex}
\stackrel{\mbox{(2)}}{\longleftrightarrow}
\mbox{Multinets}
\]
The first one is an upper bound for the rank of eigenspace 
in terms of the Betti numbers of the Aomoto complexes 
over finite fields \cite{ps-sp}. It was subsequently used 
by many authors to prove vanishing theorems 
\cite{bailet, bai-yos, dim, mac-pap}. 
The second one is the bijective correspondence between 
$3$-nets and nonzero elements in the cohomology group 
of the Aomoto complex over $\bF_3$. 
A degree one element of the Orlik-Solomon algebra over 
the finite field $\bF_q$ is bijectively corresponding to 
the coloring (with $q$-colors) of the arrangement. 
Papadima and Suciu succeeded to translate the cocycle condition 
into combinatorics of coloring \cite{ps-mod}. 
The deep relation between nontrivial eigenspaces and multinet 
structure had been conjectured. Papadima-Suciu's results 
provide a beautiful framework to understand the nontrivial 
eigenspaces via multinets. 

If we restrict our attention to real arrangements, the real 
structure contains a lot of information about 
topology of the complexification. The resonant band, 
introduced in \cite{yos-mil, yos-res}, is a useful tool 
for computing nontrivial eigenspace of the Milnor fibers and 
local system cohomology groups. 
The purpose of this paper is to introduce the notion of 
resonant bands for the Aomoto complex (over any coefficient ring) 
of a real arrangements. Then combining resonant bands techniques with 
the above Papadima-Suciu's picture (over $\bF_2$), we prove 
that real $4$-nets do not exist, which is a 
partial answer to a conjecture that the Hessian arrangement is 
the only $4$-net. 

The paper is organized as follows. \S\ref{sec:pre} is a summary 
of well known facts on multinets and Orlik-Solomon algebras. 
Especially, we describe in detail the transformation of the Orlik-Solomon 
algebra when we exchange the hyperplane at infinity, which will be 
used later. \S\ref{sec:pscorresp} is a summary of the recent work by 
Papadima-Suciu. The crucial result that we use later is 
Theorem \ref{thm:mod2ps}. 
Theorem \ref{thm:mod2ps} translates the cocycle conditions 
of the Aomoto complex (over $\bF_2$) into combinatorial structures of 
subarrangements. \S\ref{sec:resband} is the main part of this paper. 
After recalling a description of the Aomoto complex in terms of 
chambers in \S\ref{subsec:viacha} (following 
\cite{yos-cha}), we introduce the notion of 
$\eta$-resonant band in \S\ref{subsec:viaresban}. In the 
main theorem (Theorem \ref{thm:resbanAomoto}), we prove that 
the cohomology of the Aomoto complex is isomorphic to a submodule 
of the free module generated by resonant bands under certain 
non-resonant condition at infinity. When the coefficient 
ring of the Aomoto complex is $\bF_2$, 
everything can be 
described in terms of combinatorics of subarrangements. 
This translation is done in \S\ref{subsec:subarr}. 
In \S\ref{sec:4net}, we prove the non-existence of real $4$-nets. 
The key result is the Non-Separation Theorem \ref{thm:nonsep} 
in \S\ref{subsec:diag} which concerns subarrangements corresponding 
to the cocycle of the Aomoto complex over $\bF_2$. The Non-Separation 
Theorem asserts that at the intersection of multiplicity $4$, the 
subarrangement corresponding to a nontrivial cohomology class 
has special ordering. This assertion heavily relies on the 
real structure. Therefore, at this moment, it seems hopeless 
to generalize our argument to the complex case. 
If there exists a real $4$-net, it is easy to 
construct a subarrangement 
which contradicts the Non-Separation Theorem. Hence 
real $4$-nets do not exist (\S\ref{subsec:4net}). 
(This fact was first proved by Cordovil-Forge \cite[Lem. 2.4]{cor-for}. 
Our arguments prove a little bit stronger version. 
See Remark \ref{rem:pseudo}.)

\section{Preliminaries}
\label{sec:pre}

\subsection{Conventions}
\label{sec:notation}
In this paper, three types of hyperplane arrangements appear: 
affine arrangements in $\bK^\ell$, hyperplane arrangements in 
the projective space $\bK\bP^\ell$ and central arrangements 
in $\bK^{\ell+1}$. 
It is better to distinguish by notations (\cite{ot-arr, ot-int}). 

\begin{itemize}
\item 
$\A=\{H_1, \dots, H_n\}$ denotes an arrangement of 
affine hyperplanes in the affine $\ell$-space $\bK^\ell$. 
\item 
$\widetilde{\A}=c\A=\{\widetilde{H}_0, \widetilde{H}_1, 
\dots, \widetilde{H}_n\}$ denotes the coning of 
$\A$, which is a central hyperplane 
arrangement in $\bK^{\ell+1}$. 
The hyperplane $\widetilde{H}_0$ is corresponding to the 
hyperplane at infinity of $\A$. 
\item 
$\overline{\A}=\{\overline{H}_0, \overline{H}_1, 
\dots, \overline{H}_n\}$ denotes the projectivization 
of $\widetilde{\A}$, 
which is a hyperplane arrangement 
on the projective $\ell$-space $\bK\bP^\ell$ induced by 
$\widetilde{\A}$. 
\item 
$\dec_{\widetilde{H_i}}\widetilde{\A}=\{\dec_{\widetilde{H}_i}\widetilde{H}_0, \dots, \widehat{\dec_{\widetilde{H}_i}\widetilde{H}_i}, 
\dots, \dec_{\widetilde{H}_i}\widetilde{H}_n\}$ denotes the deconing 
of $\widetilde{\A}$ with respect to the hyperplane $\widetilde{H}_i$. 
Note that $\dec_{\widetilde{H}_0}\widetilde{\A}=\A$. 
\end{itemize}
Other frequently used notations are: 
\begin{itemize}
\item 
$R$: a commutative ring (unless stated otherwise), 
\item 
$\bK$: a field, 
\item 
$M(\A)$: the complexified complement of $\A$. 
\end{itemize}

\subsection{Multinets}
\label{sec:milnor}

In this subsection, we recall several facts on multinets. 

\begin{Definition}
Let $\overline{\A}=\{\overline{H}_0, \dots, \overline{H}_n\}$ be a projective line arrangement in $\C\bP^2$. 
Let $k\geq 3$ and $d\geq 2$ be integers. 
A {\em (reduced) $(k,d)$-multinet} (or {\em $k$-multinet} for simplicity) 
on $\A$ is a pair 
$(\mathcal{N}, \mathcal{X})$, where $\mathcal{N}$ is a 
partition of $\A$ into $k$ classes 
$\overline{\A}=
\overline{\A}_1\sqcup\dots\sqcup \overline{\A}_k$ 
and $\mathcal{X}\subset\C\bP^2$ 
is a set of multiple points (called the base locus) 
such that 
\begin{itemize}
\item[$(i)$] $|\overline{\A}_1|=\dots=|\overline{\A}_k|=:d$; 
\item[$(ii)$] $\overline{H}\in\overline{\A}_i$ and $\overline{H}'\in\overline{\A}_j$ ($i\neq j$) imply that 
$\overline{H}\cap \overline{H}'\in\mathcal{X}$; 
\item[$(iii)$] for all $p\in\mathcal{X}$, $n_p:=|\{\overline{H}\in\overline{\A}_i\mid \overline{H}\ni p\}|$ 
is constant and independent of $i$;  
\item[$(iv)$] for any $\overline{H}, \overline{H}'\in\overline{\A}_i$ ($i=1, \dots, k$), there is a sequence $\overline{H}=\overline{H}'_0, \overline{H}'_1, \dots, \overline{H}'_r=\overline{H}'$ in $\overline{\A}_i$ such that $\overline{H}'_{j-1}\cap \overline{H}'_j\notin\mathcal{X}$ for $1\leq j\leq r$. 
\end{itemize}
If $n_p = 1$ for every $p\in\mathcal{X}$, then $(\mathcal{N}, \mathcal{X})$ is called a \emph{net}. 
\end{Definition}
Note that if 
$(\mathcal{N}, \mathcal{X})$ is a $(k,d)$-net, 
then each $p\in\mathcal{X}$ has multiplicity $k$. 

The next theorem, which combines results of Pereira and Yuzvinsky 
\cite{per-yuz, yuz-new}, 
summarizes what is known about the existence 
of non-trivial multinets on arrangements 
(see also \cite{suc-mil,bar-yuz,yuz-ab} for more results). 
\begin{Theorem}Let $\overline{\A}$ be a $k$-multinet, with base locus $\mathcal{X}$. Then
\begin{enumerate}
\item If $|\mathcal{X}|>1$, then $k=3$ or $4$. 
\item If there is a hyperplane $\overline{H}\in \overline{\A}$ such that $m_H >1$, then $k=3$. 
\item If $k=4$, then $|\mathcal{X}|=d^2$ and it is a $(4,d)$-net. 
\end{enumerate}
\end{Theorem}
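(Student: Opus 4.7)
The plan is to pass from the multinet to the \emph{Ceva pencil} of plane curves it determines, and then derive all three conclusions from global invariants of that pencil. Set $Q_i:=\prod_{\overline{H}\in\overline{\A}_i}\alpha_{\overline{H}}^{m_{\overline{H}}}\in H^0(\bC\bP^2,\scO(d))$ for each $i=1,\dots,k$. Condition $(ii)$ implies that $Q_i-c_{ij}Q_j$ vanishes on $\mathcal{X}$ for suitable $c_{ij}\in\bC^\ast$, so $Q_1,\dots,Q_k$ span a two-dimensional subspace of $H^0(\bC\bP^2,\scO(d))$, producing a pencil $\scP\subset|\scO(d)|$ whose base locus is supported on $\mathcal{X}$ and which has $Q_1,\dots,Q_k$ as completely reducible members; condition $(iv)$ ensures each $Q_i$ is a connected member of $\scP$.

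For part $(1)$, I would resolve $\mathcal{X}$ by blowing up $\bC\bP^2$ to obtain a proper morphism $f\colon\widetilde{X}\to\bP^1$ with smooth, connected general fibre $F$. Combining the Euler characteristic identity
\begin{equation*}
\chi(\widetilde{X})\;=\;2\,\chi(F)\;+\;\sum_{t\in\bP^1}\bigl(\chi(F_t)-\chi(F)\bigr)
\end{equation*}
with a lower bound on the defect $\chi(F_t)-\chi(F)$ at each completely reducible fibre, coming from the combinatorics of the $\overline{\A}_i$, yields the numerical inequality that forces $k\le 4$. This is the heart of Pereira--Yuzvinsky's argument.

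For part $(2)$, suppose some line $\overline{H}$ has $m_{\overline{H}}>1$. I would analyze the logarithmic $1$-forms $d\log Q_i$, whose residues along a component of $\overline{\A}_i$ encode the weights $m_{\overline{H}}$, together with the $(k-2)$-dimensional space of relations among the $Q_i$ induced by the $2$-dimensionality of the pencil. A residue-balance analysis at each $p\in\mathcal{X}$ then produces a system of linear conditions that admits a nontrivial solution with some weight $>1$ only if $k\le 3$.

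For part $(3)$, a direct double-count of line pairs grouped by their intersection point gives
\begin{equation*}
d^2\;=\;|\overline{\A}_i|\cdot|\overline{\A}_j|\;=\;\sum_{p\in\mathcal{X}}n_p^2,\qquad i\ne j.
\end{equation*}
To upgrade this to $n_p=1$ for every $p$ when $k=4$, I would combine it with part $(2)$ (which rules out weights $>1$ whenever $k\ge 4$) and with a local analysis of the pencil at each $p\in\mathcal{X}$: a point with $n_p\ge 2$ would force the four members $Q_1,\dots,Q_4$ to share a common scheme-theoretic multiplicity at $p$ incompatible with their being distinct connected fibres of a degree-$d$ pencil. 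Hence $n_p=1$ for all $p$, $|\mathcal{X}|=d^2$, and $(\mathcal{N},\mathcal{X})$ is a net. The main obstacle is part $(1)$: the sharp bound $k\le 4$ is genuinely global and cannot be reached by combinatorics of $\overline{\A}$ alone, whereas parts $(2)$ and $(3)$ follow comparatively directly from the pencil picture once $(1)$ is in hand.
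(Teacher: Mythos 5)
This theorem is not proved in the paper at all: it is quoted as a combination of results of Pereira--Yuzvinsky and Yuzvinsky (\cite{per-yuz, yuz-new}), so there is no internal proof to compare against. Your outline does point at the strategy used in those references --- pass to the associated degree-$d$ pencil (the Ceva pencil), resolve its base locus, and extract $k\le 4$ from the Euler characteristic of the resulting fibration over $\bP^1$ --- so as a roadmap it is aimed correctly.

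As a proof, however, every step where the actual content lies is left as a black box, and two of the asserted inferences do not hold as stated. First, the claim that $Q_1,\dots,Q_k$ span a two-dimensional subspace of $H^0(\bC\bP^2,\scO(d))$ does not follow from the observation that $Q_i-c_{ij}Q_j$ vanishes on $\mathcal{X}$: since every $Q_i$ already vanishes at every point of $\mathcal{X}$, that observation is vacuous for any $c_{ij}$. Putting a third $Q_l$ into the pencil spanned by $Q_1,Q_2$ is the Falk--Yuzvinsky theorem and needs the multiplicity condition $(iii)$ plus a Noether-type argument showing $Q_l$ vanishes on the full complete-intersection scheme $V(Q_1)\cap V(Q_2)$. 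Second, for part $(1)$ the entire force of the theorem sits in the ``lower bound on the defect $\chi(F_t)-\chi(F)$ at each completely reducible fibre,'' which you assert but never derive; without it the Euler-characteristic identity yields nothing. Third, the arguments offered for parts $(2)$ and $(3)$ are placeholders, and the local claim in part $(3)$ --- that $n_p\ge 2$ is ``incompatible with distinct connected fibres of a degree-$d$ pencil'' --- cannot be a purely local statement, since for $k=3$ there exist reduced multinets with $n_p>1$ that are not nets; whatever rules out $n_p>1$ must use $k=4$ quantitatively (in the literature it is again the tightness of the Euler-characteristic inequality in the presence of four completely reducible fibres). In short, the proposal reproduces the table of contents of the Pereira--Yuzvinsky argument but none of its estimates.
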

Although several infinite families of multinets with $k = 3$ are known, 
only one multinet with $k = 4$ is known to exist: 
the $(4, 3)$-net on the Hessian arrangement (which is defined 
over $\bQ(\sqrt{-3})$). 
It is conjectured that 
the only $(4,d)$-net is the Hessian arrangement. 
In \cite{dmwz}, it is proved that the Hessian is the 
unique $(4,d)$-net for $d\leq 6$ (hence for $|\A|\leq 24$). 
We will later prove that there does not exist real $(4,d)$-net 
for any $d$.

\subsection{Orlik-Solomon algebra and Aomoto complex}

Let $\A=\{H_1, \dots, H_n\}$ be an arrangement of 
affine hyperplanes in $\bK^{\ell}$ 
and $R$ be a commutative ring. 
Let $E_1=\bigoplus_{j=1}^nRe_j$ be the 
free module generated by $e_1, e_2, \dots, e_n$, where $e_i$ is 
a symbol corresponding to the hyperplane $H_i$. 
Let $E=\wedge E_1$ be the exterior algebra over $R$. 
The algebra $E$ is graded via $E=\bigoplus_{p=0}^nE_p$, 
where $E_p=\wedge^pE_1$. 
The $R$-module $E_p$ is free and has the 
distinguished basis consisting of monomials 
$e_S:=e_{i_1}\wedge\cdots\wedge e_{i_p}$, 
where $S=\{{i_1},\dots, {i_p}\}$ is 
running through all the subsets of 
$\{1,\dots,n\}$ of cardinality $p$ and $i_1<i_2<\cdots<i_p$. 
The graded algebra $E$ is a commutative DGA with respect to 
the differential $\partial$ of degree $-1$ uniquely defined 
by the conditions $\partial e_i=1$ for all $i=1,\dots, n$ and 
the graded Leibniz formula. Then for every $S\subset\{1,\dots,n\}$ 
of cardinality $p$
\[
\partial e_S=\sum_{j=1}^p(-1)^{j-1}e_{S_j},
\]
where $S_j$ is the complement in $S$ to its $j$-th element. 

For every $S\subset\{1,\dots,n\}$, put 
$\cap S=\bigcap_{i\in S}H_i$ (possibly $\cap S=\emptyset$). The set of all 
intersections 
$L(\A)=\{\cap S\mid S\subset \{1,\dots,n\}\}$ 
is called 
the intersection poset. 
The subset $S\subset\{1,\dots,n\}$ is called 
\emph{dependent} if $\cap S\ne\emptyset$ 
and the set of linear polynomials $\{\alpha_i~|~i\in S\}$ with 
$H_i=\alpha_i^{-1}(0)$, is linearly dependent.
\begin{Definition}\label{def:osalgbr} 
The \emph{Orlik-Solomon ideal} of $\A$ is the ideal $I=I(\A)$ of 
$E$ generated by 
\begin{itemize}
\item[$(1)$] 
all $e_S$ with $\cap S=\emptyset$ and 
\item[$(2)$] 
all $\partial e_S$ with $S$ dependent. 
\end{itemize}
The algebra $A=A_R^\bullet(\A)=E/I(\A)$ is called the 
\emph{Orlik-Solomon algebra} of $\A$.
\end{Definition}
Clearly $I$ is a homogeneous ideal of $E$ whence $A$ is a graded algebra and we can write $A=\bigoplus A_R^p$, where $A_R^p=E_p/(I\cap E_p)$.
If $\A$ is central, then for any $S\subset\A$, we have 
$\cap S\neq\emptyset$. Therefore, the Orlik-Solomon ideal is generated 
by the element of type $(2)$ of Definition \ref{def:osalgbr}. 
In this case, the map 
$\partial$ induces a well-defined differential 
$\partial\colon A_R^\bullet(\A)\longrightarrow A_R^{\bullet -1}(\A)$. 

Notice that, for each $p$, we can write (Brieskorn decomposition) 
\begin{equation}
\label{eq:brieskorn}
A_R^p(\A)=\bigoplus_{X\in L_p(\A)}A_R^p(\A_X),
\end{equation}
where $L_p(\A):=\{X\in L(\A)~|~\codim X=p\}$ and 
$\A_X:=\{H\in\A~|~X\subset H\}$. 

Recall that the coning $\widetilde{\A}=c\A=\{\widetilde{H}_0, 
\widetilde{H}_1, \dots, \widetilde{H}_n\}$ of $\A$ is a central 
arrangement in $\bK^{\ell+1}$. We denote the corresponding 
generators of Orlik-Solomon algebra $A_R^\bullet(\widetilde{\A})$ by 
$\widetilde{e}_0, \widetilde{e}_1, \dots, \widetilde{e}_n$. 
The map 
\[
\iota\colon
A_R^1(\A)\longrightarrow A_R^1(\widetilde{\A}):
e_i\longmapsto \widetilde{e}_i-\widetilde{e}_0,
\]
induces an injective $R$-algebra homomorphism 
$\iota\colon A_R^\bullet(\A)\longrightarrow A_R^\bullet(\widetilde{\A})$ 
(\cite{yuz-surv}). 
The image of the embedding $\iota$ is equal to the subalgebra 
\[
A_R^\bullet(\widetilde{\A})_0:=
\{\omega\in A_R^\bullet(\widetilde{\A})\mid \partial(\omega)=0\} 
\]
of $A_R^\bullet(\widetilde{\A})$. 
Consider the deconing 
$\A':=\dec_{\widetilde{H}_i}\widetilde{\A}=
\{H_0', \dots, \widehat{H_i'}, \dots, H_n'\}$ 
with respect to the hyperplane 
$\widetilde{H}_i\in \widetilde{\A}$. We denote the generators of 
Orlik-Solomon algebra $A_R^\bullet(\A')$ by $e_0', \dots, 
\widehat{e_i'}, \dots, e_n'$. 
Then the Orlik-Solomon algebras of $\A$ and $\A'$ are 
isomorphic $A_R^\bullet(\A)\simeq A_R^\bullet(\A')$. The 
explicit isomorphism is given by 
\[
e_j\longmapsto
\left\{
\begin{array}{cl}
e_j'-e_0',&\mbox{ if }1\leq j\leq n, j\neq i, \\
-e_0',&\mbox{ if }j=i. 
\end{array}
\right.
\]

Let us fix an element $\eta=\sum_{i=1}^na_ie_e
\in A_R^1(\A)$. Since $\eta\wedge\eta=0$, 
\[
0\longrightarrow
A_R^1(\A)\stackrel{\eta}{\longrightarrow}
A_R^2(\A)\stackrel{\eta}{\longrightarrow}\dots
\stackrel{\eta}{\longrightarrow}
A_R^\ell(\A)\stackrel{\eta}{\longrightarrow}0
\]
forms a cochain complex, which is called the 
\emph{Aomoto complex} $(A_R^\bullet(\A), \eta)$. 
By the above embedding $\iota$, 
we can identify the Aomoto complex 
$(A_R^\bullet(\A), \eta)$ with 
$(A_R^\bullet(\widetilde{\A})_0, \widetilde{\eta})$, where 
$\widetilde{\eta}=\iota(\eta)=\sum_{i=1}^na_i\widetilde{e}_i-
(a_1+\dots+a_n)\widetilde{e}_0$, (\cite{fal-coh}).

\section{Mod $p$ Aomoto complex and Papadima-Suciu correspondence}
\label{sec:pscorresp}

In this section, we recall a recent work by Papadima and Suciu 
\cite{ps-mod}. They found a way of constructing $3$-net from 
an element of the cohomology of Aomoto complex over $\bF_3$. 
Let $\overline{\A}=\{\overline{H}_0, \dots, \overline{H}_n\}$ 
be a line arrangement on the projective plane 
$\bK\bP^2$ with $3|\sharp(\overline{\A})$. Assume that 
there do not exist multiple points of multiplicity 
$\{3r\mid r\in\bZ, r>1\}$. Let 
$\widetilde{\eta}_0:=\sum_{i=0}^n\widetilde{e}_i\in
A_{\bF_3}^1(\widetilde{\A})_0$ be the diagonal element. Then 
there is a natural bijective correspondence: 
\begin{equation}
\label{eq:pscorresp}
(H^1(A_{\bF_3}^\bullet(\widetilde{\A})_0, \widetilde{\eta}_0)\setminus\{0\})
/\bF_3^\times\stackrel{\simeq}{\longrightarrow}
\left\{
\begin{array}{c}
\mbox{Isomorphism classes of}\\
\mbox{$3$-net structures on $\overline{\A}$} 
\end{array}
\right\}. 
\end{equation}
The correspondence is explicitly given by 
$H^1(A_{\bF_3}^\bullet(\widetilde{\A})_0, \widetilde{\eta}_0)\ni 
\omega=\sum_{k=0}^n a_i\widetilde{e}_i \longmapsto
(\overline{\A}_0,\overline{\A}_1,\overline{\A}_2)$, 
where $\overline{\A}_m=\{\overline{H}_i\mid a_i=m\}$ ($m=0,1,2$). 
The point of the above correspondence is that by using 
the local structures of the Orlik-Solomon algebra, we can translate 
the cocycle condition into the combinatorial condition of 
$(\overline{\A}_0,\overline{\A}_1,\overline{\A}_2)$, which 
turns out to be exactly the defining conditions of $3$-nets. 
Later we will employ a similar consideration for the 
Aomoto complex over $\bF_2$ which we summarize in 
this section.

\subsection{A local lemma}
\label{subsec:loclem}

To analyze the map $\eta\colon A_R^1(\A)\longrightarrow A_R^2(\A)$ by 
the Brieskorn decomposition (\ref{eq:brieskorn}), the next 
lemma is useful (cf. \cite[\S3]{lib-yuz}). 

\begin{Lemma}
\label{lem:local}
Let $\scC_s=\{H_1, \dots, H_s\}$ be a central arrangement in 
$\bK^2$ (Figure \ref{fig:central}). 
Let $R$ be a commutative ring and 
$\eta=a_1e_1+\dots +a_se_s\in A_R^1(\scC_s)$ be a degree one 
element of Orlik-Solomon algebra. 
\begin{itemize}
\item[(1)] 
$\eta\wedge(e_i-e_j)=-(\sum_{i=1}^s a_i)\cdot e_i\wedge e_j$. 
\item[(2)] 
Let $\omega=b_1e_1+\dots +b_se_s\in A_R^1(\A)$ be another element. 
Assume that $\omega$ and $\eta$ are linearly independent 
(i.e.,  $c_1\eta+c_2\omega=0, (c_1, c_2\in R) \Longrightarrow 
c_1=c_2=0$). Then $\eta\wedge\omega=0$ if and only if 
$\sum_{i=1}^s a_i=\sum_{i=1}^s b_i=0$. 
\end{itemize}

\begin{figure}[htbp]
\begin{picture}(400,100)(0,0)
\thicklines

\put(140,10){\line(1,1){90.1}}
\put(160,10){\line(2,3){60.1}}
\put(180,10){\line(1,3){30.1}}

\multiput(195,4)(10,0){4}{\circle*{3}}

\put(240,10){\line(-2,3){61}}

\put(132,-0.1){$H_1$}
\put(152,-0.1){$H_2$}
\put(172,-0.1){$H_3$}
\put(240,-0.1){$H_s$}

\end{picture}
      \caption{Central arrangement $\scC_s$}
\label{fig:central}
\end{figure}

\end{Lemma}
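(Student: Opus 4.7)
The plan is to exploit that for a central pencil $\scC_s$ in $\bK^2$, the degree two Orlik-Solomon module $A_R^2(\scC_s)$ is free of rank $s-1$ with basis $\{e_1 \wedge e_k \mid 2 \leq k \leq s\}$, and every relation is generated by the triple relations $\partial(e_i e_j e_k) = e_j e_k - e_i e_k + e_i e_j = 0$ in $A_R^2$, since every triple $\{i,j,k\}$ is dependent in a pencil.

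For part (1), I would first verify the identity $e_k \wedge (e_i - e_j) = -e_i \wedge e_j$ in $A_R^2(\scC_s)$ for every index $k$. The cases $k \in \{i,j\}$ are immediate from $e_i \wedge e_i = 0$, and for $k \notin \{i,j\}$ the identity is a direct rewriting of the triple relation. Multiplying by $a_k$ and summing over $k$ then yields $\eta \wedge (e_i - e_j) = -\bigl(\sum_{k=1}^s a_k\bigr) \cdot e_i \wedge e_j$.

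For part (2), I would use part (1) as the workhorse. Setting $a := \sum_k a_k$ and $b := \sum_k b_k$, I rewrite
\[
\omega = b\, e_1 + \sum_{k \geq 2} b_k (e_k - e_1),
\]
so that $\eta \wedge \omega = b\,(\eta \wedge e_1) + \sum_{k \geq 2} b_k\, \eta \wedge (e_k - e_1)$. Expanding $\eta \wedge e_1$ directly and applying part (1) to each $\eta \wedge (e_k - e_1)$ assembles the whole expression into $\sum_{k \geq 2}(a b_k - b a_k)\, e_1 \wedge e_k$. Since $\{e_1 \wedge e_k\}_{k \geq 2}$ is an $R$-basis of $A_R^2(\scC_s)$, the vanishing of $\eta \wedge \omega$ is equivalent to $a b_k = b a_k$ for all $k \geq 2$; summing these relations recovers the missing case $a b_1 = b a_1$, so $a \omega = b \eta$ holds in $A_R^1(\scC_s)$.

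The only delicate point, and the place where the hypothesis genuinely enters, is the last step: rewriting $a \omega - b \eta = 0$ as the linear relation $(-b)\cdot \eta + a \cdot \omega = 0$, the linear independence assumption forces $a = 0$ and $b = 0$. Conversely, $a = b = 0$ makes each coefficient $a b_k - b a_k$ vanish, so $\eta \wedge \omega = 0$. I do not foresee any real obstacle; the main things to keep track of are the signs in the triple relation, and the observation that the argument works for any commutative ring $R$ precisely because linear independence is used in its formal sense rather than via an invertibility or cancellation assumption.
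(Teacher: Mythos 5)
Your proof is correct, and it diverges from the paper's in exactly one step. Part (1) is the same one-line argument as in the paper (the triple relation $e_i\wedge e_j=e_i\wedge e_k-e_j\wedge e_k$, summed against the coefficients of $\eta$). In part (2), the ``if'' direction coincides with the paper's (decompose $\omega$ into a multiple of one generator plus differences, apply (1)). For the ``only if'' direction, the paper passes from $\eta\wedge\omega=0$ to the relation $a\omega-b\eta=0$ in $A_R^1(\scC_s)$ by applying the Orlik--Solomon differential, which is well defined on $A_R^\bullet(\scC_s)$ because the arrangement is central, via the Leibniz rule $0=\partial(\eta\wedge\omega)=(\partial\eta)\,\omega-(\partial\omega)\,\eta=a\omega-b\eta$; you instead expand $\eta\wedge\omega$ in the basis $\{e_1\wedge e_k\}_{k\geq 2}$ of $A_R^2(\scC_s)$, read off the coefficients $ab_k-ba_k$, and recover the same relation $a\omega=b\eta$. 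Both routes then conclude identically from the linear-independence hypothesis. The paper's version is shorter and avoids sign bookkeeping; yours is more self-contained in that it does not presuppose that $\partial$ descends to the quotient algebra, but it does rely on $A_R^2(\scC_s)$ being free of rank $s-1$ on the stated basis over an arbitrary commutative ring --- true by the nbc-basis theorem and base change from $\bZ$, but worth stating explicitly since the freeness over a general $R$ (not just a field) is what makes the coefficient comparison legitimate.
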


\begin{proof}
(1) It is straightforward from the relation $e_{ij}=e_{ik}-e_{jk}$, where 
$e_{ij}:=e_i\wedge e_j$. 

(2) If $\sum_{i=1}^s b_i=0$, then 
$\omega=b_1(e_1-e_s)+\dots+b_{s-1}(e_1-e_{s-1})$. 
Then applying (1), we have $\eta\wedge\omega=
-(\sum_{i=1}^sa_i)\cdot(b_1e_{1s}+\dots +b_{s-1}e_{1,s-1})$. 
This is zero if $\sum_{i=1}^s a_i=0$. 
Conversely, suppose $\eta\wedge\omega=0$. 
Since $\scC_s$ is central, we can apply the 
differential 
$\partial$. We have 
\[
0=\partial(\eta\wedge\omega)=(\partial\eta)\omega-
(\partial\omega)\eta. 
\]
By the assumption that $\eta$ and $\omega$ are linearly independent, 
$\partial\eta=\partial\omega=0$. 
\end{proof}

\subsection{Aomoto complex over $\bF_p$}
\label{subsec:pscorresp}

Let $\A=\{H_1, \dots, H_n\}$ be an arrangement of affine 
lines in $\bK^2$. Choose a prime $p$ such that 
$p|(n+1)$. Consider the Aomoto complex over 
$R=\bF_p$ and the embedding $\iota\colon A_{\bF_p}^\bullet(\A)
\stackrel{\simeq}{\longrightarrow} 
A_{\bF_p}^\bullet(\widetilde{\A})_0\subset A_{\bF_p}^\bullet(\widetilde{\A})$. 
Since $n$ is equal to $-1$ in $\bF_p$, the image of the 
diagonal element 
$\eta_0:=e_1+\dots+e_n\in A_{\bF_p}^1(\A)$ is 
\[
\widetilde{\eta}_0:=\iota(\eta_0)=
\widetilde{e}_0+\widetilde{e}_1+\dots+\widetilde{e}_n
\in A_{\bF_p}^1(\widetilde{\A})_0. 
\]
We consider the first cohomology group of the Aomoto complex 
$(A_{\bF_p}^\bullet(\A), \eta_0)\simeq
(A_{\bF_p}^\bullet(\widetilde{\A})_0, \widetilde{\eta}_0)$. Let 
$\widetilde{\omega}=
\sum_{i=0}^na_i\widetilde{e}_i\in A_{\bF_p}^1(\widetilde{\A})_0$. 
Let us translate the relation 
$\widetilde{\eta}\wedge\widetilde{\omega}=0$ in terms 
of coefficients $a_i$ of $\widetilde{\omega}$ 
by using the Brieskorn decomposition 
(\ref{eq:brieskorn}). 
For an intersection $X\in L_2(\widetilde{\A})$ of 
codimension two, let us define the localization at $X$ by 
\begin{equation}
\label{eq:proj}
\widetilde{\omega}|_X:=
\sum_{\widetilde{H}_i\in\widetilde{\A}_X}a_i\widetilde{e}_i. 
\end{equation}

\begin{Proposition}
\label{prop:zeroprod}
With notation as above, $\widetilde{\eta}_0\wedge
\widetilde{\omega}=0$ 
if and only if the following $(i)$ and $(ii)$ hold. 
\begin{itemize}
\item[$(i)$] 
Let $X\in L_2(\widetilde{\A})$. If $|\widetilde{\A}_X|$ is 
divisible by $p$, then $
\sum_{\widetilde{H}_i\in\widetilde{\A}_X}a_i=0$ in $\bF_p$. 
\item[$(ii)$] 
Let $X\in L_2(\widetilde{\A})$. If $|\widetilde{\A}_X|$ is not 
divisible by $p$, then 
\[
a_{i_1}=a_{i_2}=\dots=a_{i_t}, 
\]
where $\widetilde{\A}_X=\{\widetilde{H}_{i_1}, \widetilde{H}_{i_2}, \dots, 
\widetilde{H}_{i_t}\}$. (This is equivalent to that 
$\widetilde{\omega}|_X$ and $\widetilde{\eta}_0|_X$ are linearly dependent.) 
\end{itemize}
\end{Proposition}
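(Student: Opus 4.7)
The plan is to reduce the global identity $\widetilde{\eta}_0 \wedge \widetilde{\omega} = 0$ to a collection of local identities, one at each codimension two flat, and then to apply Lemma~\ref{lem:local} to each local piece.

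First I would invoke the Brieskorn decomposition~(\ref{eq:brieskorn}) for $A_{\bF_p}^2(\widetilde{\A})$. Given any two degree-one classes $\alpha, \beta \in A_{\bF_p}^1(\widetilde{\A})$, the product $\alpha \wedge \beta$ decomposes as
\[
\alpha \wedge \beta \;=\; \sum_{X \in L_2(\widetilde{\A})} \alpha|_X \wedge \beta|_X,
\]
with the summand at $X$ lying in $A_{\bF_p}^2(\widetilde{\A}_X)$, where $\alpha|_X$ is the localization defined in~(\ref{eq:proj}). Consequently $\widetilde{\eta}_0 \wedge \widetilde{\omega} = 0$ if and only if $\widetilde{\eta}_0|_X \wedge \widetilde{\omega}|_X = 0$ in $A_{\bF_p}^2(\widetilde{\A}_X)$ for every $X \in L_2(\widetilde{\A})$. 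Since all hyperplanes in $\widetilde{\A}_X$ contain the line $X$, the subarrangement $\widetilde{\A}_X$ is essentially a central $2$-arrangement, so Lemma~\ref{lem:local} applies to the pair $(\widetilde{\eta}_0|_X, \widetilde{\omega}|_X)$.

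Next I would do a case split at a fixed $X \in L_2(\widetilde{\A})$ with $s := |\widetilde{\A}_X|$. Note $\widetilde{\eta}_0|_X = \sum_{\widetilde{H}_i \in \widetilde{\A}_X} \widetilde{e}_i$ has all coefficients equal to $1$, so the two localizations are \emph{linearly dependent} in $A_{\bF_p}^1(\widetilde{\A}_X)$ precisely when all $a_i$ with $\widetilde{H}_i \in \widetilde{\A}_X$ coincide, which is condition~$(ii)$.
\begin{itemize}
\item If $\widetilde{\eta}_0|_X$ and $\widetilde{\omega}|_X$ are linearly dependent, then the wedge is automatically zero.
\item If they are linearly independent, Lemma~\ref{lem:local}(2) gives vanishing of the wedge iff $\sum_{\widetilde{H}_i\in\widetilde{\A}_X} 1 = 0$ and $\sum_{\widetilde{H}_i\in\widetilde{\A}_X} a_i = 0$ in $\bF_p$, i.e.\ both $p \mid s$ and condition~$(i)$ hold.
\end{itemize}

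Combining the two branches yields the dichotomy claimed in the Proposition. When $p \mid s$, linear dependence forces all coefficients equal to some $c$ with $sc = 0$, so $\sum a_i = 0$ automatically; and in the linearly independent subcase the vanishing is equivalent to $\sum a_i = 0$. Thus the local vanishing at $X$ is equivalent to condition $(i)$. When $p \nmid s$, the linearly independent subcase is ruled out (it would force $s \equiv 0 \pmod p$), so the local vanishing at $X$ forces linear dependence, i.e.\ condition $(ii)$. Assembling these equivalences over all $X \in L_2(\widetilde{\A})$ completes the proof. The only mildly delicate point is the bookkeeping in the dependent case when $p\mid s$, where one must check that $(i)$ is indeed automatically satisfied rather than being an independent constraint; this follows from $\sum a_i = sc = 0$.
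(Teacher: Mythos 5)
Your proposal is correct and follows exactly the paper's argument: reduce via the Brieskorn decomposition to the local conditions $\widetilde{\eta}_0|_X\wedge\widetilde{\omega}|_X=0$ at each $X\in L_2(\widetilde{\A})$, then apply Lemma~\ref{lem:local}(2). The paper states this in two sentences; your version merely makes explicit the case analysis (linearly dependent versus independent localizations, and why condition $(i)$ is automatic in the dependent case when $p\mid s$), which is a faithful elaboration rather than a different route.
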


\begin{proof}
By the Brieskorn decomposition (\ref{eq:brieskorn}), 
$\widetilde{\eta}_0\wedge\widetilde{\omega}=0$ if and only if 
$\widetilde{\eta}_0|_X\wedge\omega|_X=0$ for all $X\in L_2(\widetilde{\A})$. 
Using the Lemma \ref{lem:local} (2), it is equivalent to 
$(i)$ and $(ii)$ above. 
\end{proof}

\subsection{Aomoto complex over $\bF_2$ and subarrangements}
\label{subsec:mod2}

Now we consider the Aomoto complex over $\bF_2=\bZ/2\bZ$. 
Since the coefficient is either $0$ or $1\in\bF_2$, 
elements of $A_{\bF_2}^1(\widetilde{\A})$ can be identified with 
subarrangements of $\widetilde{\A}$. 

\begin{Definition}
Let $\widetilde{\scS}\subset\widetilde{\A}$ be a subset. 
Let us define an element corresponding to the subset by 
\[
\widetilde{e}(\widetilde{\scS}):=
\sum_{\widetilde{H}_i\in\widetilde{\scS}}\widetilde{e}_i
\in A_{\bF_2}^1(\widetilde{\A}). 
\]
For an affine arrangement $\A=\{H_1, \dots, H_n\}$ and a 
subset $\scS\subset\A$, similarly we define 
$e(\scS):=\sum_{H_i\in\scS}e_i\in A_{\bF_2}^1(\A)$. 
\end{Definition}
Obviously the diagonal element is 
$\widetilde{\eta}_0=\widetilde{e}(\widetilde{\A})$ and 
$\widetilde{e}(\widetilde{\scS})+\widetilde{\eta}_0=
\widetilde{e}(\widetilde{\A}\setminus\widetilde{\scS})$. 

Applying Proposition \ref{prop:zeroprod} for $p=2$, we have the following. 

\begin{Theorem}
\label{thm:mod2ps}
Let $\widetilde{\A}=\{\widetilde{H}_0, \widetilde{H}_1, \dots, 
\widetilde{H}_n\}$ be central arrangement in $\bK^3$. 
Let $\widetilde{\scS}\subset\widetilde{\A}$ be a subset. 
Then 
$\widetilde{\eta}_0\wedge
\widetilde{e}(\widetilde{\scS})=0$ if and only if 
the following $(i)$ and $(ii)$ hold. 
\begin{itemize}
\item[$(i)$] 
Let $X\in L_2(\widetilde{\A})$. 
If $|\widetilde{\A}_X|$ is even, then 
$|\widetilde{\scS}_X|$ is also even. 
\item[$(ii)$] 
Let $X\in L_2(\widetilde{\A})$. 
If $|\widetilde{\A}_X|$ is odd, then either 
$\widetilde{\A}_X=\widetilde{\scS}_X$ or 
$\widetilde{\scS}_X=\emptyset$. 
\end{itemize}
\end{Theorem}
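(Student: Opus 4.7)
The plan is to deduce the theorem as a direct specialization of Proposition \ref{prop:zeroprod} to the case $p = 2$ and $\widetilde{\omega} = \widetilde{e}(\widetilde{\scS})$.

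First I would unpack $\widetilde{e}(\widetilde{\scS})$ in the form required by Proposition \ref{prop:zeroprod}: namely, $\widetilde{e}(\widetilde{\scS}) = \sum_{i=0}^n a_i\widetilde{e}_i$ where $a_i = 1$ if $\widetilde{H}_i \in \widetilde{\scS}$ and $a_i = 0$ otherwise. With this encoding, for any $X \in L_2(\widetilde{\A})$ we have
\[
\sum_{\widetilde{H}_i \in \widetilde{\A}_X} a_i \;=\; |\widetilde{\scS}_X| \pmod{2}.
\]

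Next I would translate each clause of Proposition \ref{prop:zeroprod} separately. Clause (i) says: if $|\widetilde{\A}_X|$ is even, then the above sum must vanish in $\bF_2$, which by the displayed congruence is precisely the assertion that $|\widetilde{\scS}_X|$ is even — matching clause (i) of Theorem \ref{thm:mod2ps}. Clause (ii) says: if $|\widetilde{\A}_X|$ is odd, then all $a_i$ for $\widetilde{H}_i \in \widetilde{\A}_X$ coincide. Since the $a_i$ take values only in $\{0,1\}$, this common value is either $0$ (hence $\widetilde{\scS}_X = \emptyset$) or $1$ (hence $\widetilde{\scS}_X = \widetilde{\A}_X$) — matching clause (ii) of the theorem.

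I do not expect any substantive obstacle: once the identification between subsets $\widetilde{\scS}\subset\widetilde{\A}$ and $\{0,1\}$-valued $1$-cochains is made, the result is a tautological reformulation of Proposition \ref{prop:zeroprod}. The only small subtlety worth a sentence of comment is that Proposition \ref{prop:zeroprod} is stated for $\widetilde{\omega}\in A_{\bF_p}^1(\widetilde{\A})_0$, but its proof via Lemma \ref{lem:local} applies to arbitrary $\widetilde{\omega}\in A_{\bF_p}^1(\widetilde{\A})$; alternatively, one may simply restrict attention to those $\widetilde{\scS}$ for which $\widetilde{e}(\widetilde{\scS})$ lies in the kernel of $\partial$, which is the only case relevant to the rest of the paper.
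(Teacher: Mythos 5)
Your proposal is correct and matches the paper's own derivation: the paper obtains Theorem \ref{thm:mod2ps} precisely by applying Proposition \ref{prop:zeroprod} with $p=2$ and identifying subsets $\widetilde{\scS}\subset\widetilde{\A}$ with $\{0,1\}$-valued coefficient vectors, exactly as you do. Your remark about the hypothesis $\widetilde{\omega}\in A_{\bF_p}^1(\widetilde{\A})_0$ versus general $\widetilde{\omega}$ is a reasonable observation, and your resolution (the argument via Lemma \ref{lem:local} and the Brieskorn decomposition does not use $\partial\widetilde{\omega}=0$) is sound.
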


\begin{Remark}
\label{rem:equivetapart} 
The existence of 
$\widetilde{\omega}\in A_{\bF_2}^1(\widetilde{\A})$ 
such that $\widetilde{\omega}\neq 0, 
\widetilde{\omega}\neq\widetilde{\eta}_0$ and 
$\widetilde{\eta}_0\wedge\widetilde{\omega}=0$ is equivalent to the existence of a partition $\widetilde{\A}=\widetilde{\A}_1\sqcup\widetilde{\A}_2$ such that at each intersection $X\in L_2(\widetilde{\A})$ of codimension $2$, (at least) one of the following is satisfied: 
\begin{enumerate}
\item $\widetilde{\A}_X$ is included in $\widetilde{\A}_1$ or in $\widetilde{\A}_2$, 
\item $|(\widetilde{\A}_1)_X|$ and $|(\widetilde{\A}_2)_X|$ are both even. 
\end{enumerate}
The authors do not know any real essential arrangement which possesses the 
above partition. Hence, we do not know any real essential arrangement which 
satisfies $H^1(A_{\bF_2}^\bullet(\widetilde{\A})_0, \widetilde{\eta}_0)
\neq 0$. 
\end{Remark}

\begin{Example}
\label{ex:partition}
Suppose that $\overline{\A}=\overline{\A}_1\sqcup
\overline{\A}_2\sqcup\overline{\A}_3\sqcup\overline{\A}_4$ is 
a $4$-net. Then 
\[
\begin{split}
\widetilde{\eta}_0\wedge
\widetilde{e}
(\widetilde{\A}_1\cup\widetilde{\A}_2)&=
\widetilde{\eta}_0\wedge
\widetilde{e}
(\widetilde{\A}_1\cup\widetilde{\A}_3)\\
&=
\widetilde{\eta}_0\wedge
\widetilde{e}
(\widetilde{\A}_1\cup\widetilde{\A}_4)\\
&=0
\end{split}
\]
These three elements satisfy a linear relation, 
\[
\widetilde{e}
(\widetilde{\A}_1\cup\widetilde{\A}_2)+
\widetilde{e}
(\widetilde{\A}_1\cup\widetilde{\A}_3)+
\widetilde{e}
(\widetilde{\A}_1\cup\widetilde{\A}_4)
=\widetilde{\eta}_0, 
\]
and span a two dimensional subspace 
in $H^1(A_{\bF_2}^\bullet(\widetilde{\A})_0, \widetilde{\eta}_0)$. 
We obtain a well-known inequality 
$\dim H^1(A_{\bF_2}^\bullet(\widetilde{\A})_0, \widetilde{\eta}_0)\geq 2$ 
(\cite{dim-pap, ps-sp}). 
\end{Example}

\section{Resonant bands description of Aomoto complex}
\label{sec:resband}

Resonant bands provide effective tools to compute 
local system cohomology groups and eigenspaces of 
Milnor monodromies. In this section, 
we give a description of the cohomology of the Aomoto 
complex in terms of resonant bands. 

\subsection{Aomoto complex via chambers}
\label{subsec:viacha}

We first introduce several notions related to the 
real structure of line arrangements. 
(The notions are summarized in Example \ref{ex:chambers} and 
Figure \ref{fig:example}.) 
Let $\A=\{H_1, \dots, H_n\}$ be an arrangement of affine lines in 
$\bR^2$. 
A connected component of $\bR^2\setminus\bigcup_{H\in\scA}H$ 
is called a chamber. The set of all chambers is denoted by 
$\ch(\scA)$. Let $C, C'\in\ch(\A)$. The line $H\in\A$ 
is said to separate $C$ and $C'$ when they are contained in 
opposite sides of $H$. The set of 
all lines separating $C$ and $C'$ is denoted by $\Sep(C, C')$. The set of 
chambers $\ch(\A)$ is provided with a natural metric, the so-called 
adjacency distance, $d(C, C')=|\Sep(C, C')|$. 

Let us fix a flag 
\[
\emptyset=\scF^{-1}\subset
\scF^0\subset
\scF^1\subset
\scF^2=\bR^2, 
\]
(of affine subspaces with $\dim\scF^i=i$, we also fix 
orientations of subspaces) 
satisfying the following conditions: 
\begin{itemize}
\item[(i)] (genericity) $\scF^0$ is not contained in 
$\bigcup_{i=1}^n H_i$, and $\scF^1$ intersects with 
$\bigcup_{i=1}^n H_i$ at distinct $n$ points. 
\item[(ii)] (near to $\infty$) 
\begin{itemize}
\item[--] 
$\scF^0$ does not separate $n$ points $\scF^1\cap H_i$ ($i=1, \dots, n$) 
in $\scF^1$. 
\item[--] 
$\scF^1$ does not separate intersections of $\A$ in $\bR^2$. 
\end{itemize}
\end{itemize}
(See Figure \ref{fig:example} for the example.) 
Each line $H_i$ determines two half spaces $H_i^\pm$. 
We choose $H_i^\pm$ so that 
$\scF^0\in H_i^-$ for all $i=1, \dots, n$. 
We also fix an orientation of $\scF^1$ and after 
re-numbering the lines, if necessary, we may 
assume the following 
\[
\scF^0<H_1\cap\scF^1<H_2\cap\scF^1<\dots<H_n\cap\scF^1, 
\]
with respect to the ordering of $\scF^1$. 

Associate to such flag $\scF=\{\scF^\bullet\}$, we define a 
subset of $\ch(\A)$ as follows. 
\[
\ch_\scF^i(\A)=
\{C\in\ch(\A)\mid C\cap\scF^{i-1}=\emptyset, C\cap\scF^{i-1}\neq\emptyset\}. 
\]
We denote by $R[\ch_\scF^i(\A)]=
\bigoplus_{C\in\ch_\scF^i(\A)}R\cdot[C]$, the 
free $R$-module generated by $C\in\ch_\scF^i(\A)$, where 
$R$ is a commutative ring. It is known that 
$\rank_R A^i_R(\A)=|\ch_\scF^i(\A)|$ (\cite{yos-lef}). 
We fix notations as follows. 

\begin{Assumption}
\label{assumpt:chambers}
Let us set 
$\ch_\scF^0(\A)=\{C_0\}, \ch_\scF^1(\A)=\{C_1, \dots, C_n\}$ and 
$\ch_\scF^2(\A)=\{D_1, D_2, \dots, D_b\}$, 
where $b=|\ch_\scF^2(\A)|$. We can choose $C_1, \dots, C_n$ 
such that $\Sep(C_0, C_i)=\{H_1, H_2, \dots, H_i\}$, or equivalently, 
$C_i=H_1^+\cap\dots\cap H_{i}^+\cap H_{i+1}^-\cap\dots\cap H_n^-$, for all 
$i=0, 1, \dots, n$ (see Figure \ref{fig:example}). 
\end{Assumption}
When $1\leq i<n$, the boundary of $C_i\cap\scF^1$ consists of 
two points, $H_i\cap\scF^1$ and $H_{i+1}\cap\scF^1$, while 
$C_n\cap\scF^1$ is a half-line and its boundary consists of a point 
$H_n\cap\scF^1$. 

\begin{Definition}
\label{def:nablaeta}
We use the notations above. 
Consider $\eta=\sum_{i=1}^na_i e_i\in A_R^1(\A)$. 
\begin{itemize}
\item[$(1)$] 
Define the $R$-homomorphisms $\nabla_\eta\colon R[\ch_\scF^0(\A)]\longrightarrow
R[\ch_\scF^{1}(\A)]$ as follows. 
\begin{equation*}
\begin{split}
\nabla_\eta([C_0])
&=\sum_{C\in\ch_\scF^1(\A)}\left(
\sum_{H_i\in\Sep(C_0,C)}a_i\right)\cdot [C]\\
&=\sum_{i=1}^n(a_1+\dots+a_i)\cdot[C_i].
\end{split}
\end{equation*}
\item[$(2)$] 
Define the map 
\[
\deg\colon\ch_\scF^1(\A)\times\ch_\scF^2(\A)\longrightarrow\{\pm 1, 0\}, 
\]
as follows. 
\begin{itemize}
\item[$(i)$] If $i<n$, then the segment $C_i\cap\scF^1$ has two boundaries, 
say, $H_i\cap\scF^1$ and $H_{i+1}\cap\scF^1$. 
\[
\deg(C_i, D)=
\left\{
\begin{array}{rl}
1&\mbox{ if $D\subset H_i^-\cap H_{i+1}^+$,}\\
-1&\mbox{ if $D\subset H_i^+\cap H_{i+1}^-$,}\\
0&\mbox{ otherwise.} 
\end{array}
\right.
\]
\item[$(ii)$] If $i=n$, 
\[
\deg(C_n, D)=
\left\{
\begin{array}{rl}
-1&\mbox{ if $D\subset H_n^+$,}\\
0&\mbox{ if $D\subset H_n^-$.}
\end{array}
\right.
\]
\end{itemize}
\item[$(3)$]
Define the $R$-homomorphisms $\nabla_\eta\colon R[\ch_\scF^1(\A)]\longrightarrow
R[\ch_\scF^{2}(\A)]$ as follows. 
\[
\nabla_\eta([C])=
\sum_{D\in\ch_\scF^2(\A)}\deg(C, D)
\left(
\sum_{H_i\in\Sep(C, D)}a_i
\right)
\cdot [D]. 
\]
\end{itemize}
\end{Definition}

\begin{Proposition}
\label{prop:cohom}
(\cite{yos-cha}) $(R[\ch_\scF^\bullet(\A)], \nabla_\eta)$ is a 
cochain complex. Furthermore, there is a natural 
isomorphism of cochain complexes, 
\begin{equation}
\label{eq:chamber}
\varphi\colon(R[\ch_\scF^\bullet(\A)], \nabla_\eta)
\stackrel{\simeq}{\longrightarrow}
(A_R^\bullet(\A), \eta). 
\end{equation}
At degree $1$, the isomorphism is explicitly given by 
\begin{equation}
\label{eq:explicit}
R[\ch_\scF^1(\A)]
\stackrel{\simeq}{\longrightarrow}
A_R^1(\A),\ 
[C_i]\longmapsto
\varphi([C_i])=
\left\{
\begin{array}{cl}
e_i-e_{i+1}&\mbox{ if }i<n,\\
e_n&\mbox{ if }i=n. 
\end{array}
\right.
\end{equation}
In particular, we have 
\[
H^1(R[\ch_\scF^\bullet(\A)], \nabla_\eta)
\simeq
H^1(A_R^\bullet(\A), \eta). 
\]
\end{Proposition}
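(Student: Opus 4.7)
The plan is to establish the isomorphism (\ref{eq:chamber}) by constructing $\varphi$ degree by degree and then verifying the chain map property. First I would set $\varphi([C_0]) = 1 \in A_R^0(\A) = R$ at degree $0$, use the explicit formula (\ref{eq:explicit}) at degree $1$, and at degree $2$ associate to each chamber $D \in \ch_\scF^2(\A)$ an ordered pair of lines bounding $D$ (chosen by the flag-induced orientation) to get a monomial $\varphi([D]) = \pm e_i \wedge e_j$. Each such pair is an nbc-type basis element, and the rank formula $\rank_R A_R^i(\A) = |\ch_\scF^i(\A)|$ from \cite{yos-lef} ensures $\varphi$ is an isomorphism of graded $R$-modules.

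Next I would check the chain map property. The degree $0 \to 1$ square is a direct telescoping: expanding
\[
\varphi(\nabla_\eta([C_0])) = \sum_{i=1}^{n-1}(a_1 + \dots + a_i)(e_i - e_{i+1}) + (a_1 + \dots + a_n)e_n,
\]
the coefficient of each $e_j$ collapses to $a_j$, giving $\eta = \eta \wedge \varphi([C_0])$.

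The degree $1 \to 2$ square is the main obstacle, and I would attack it via the Brieskorn decomposition (\ref{eq:brieskorn}). For each $X \in L_2(\A)$ the identity $\varphi \circ \nabla_\eta = \eta \wedge \varphi$ restricts to a purely local statement about the central subarrangement $\A_X$; on this local piece Lemma \ref{lem:local}(1) computes the wedge products in $A_R^2(\A_X)$ explicitly. The delicate bookkeeping is that the signs appearing in $\deg(C_i, D)$ are designed precisely to match the orientations of chambers $D$ around $X$, so that the local contributions from chambers on either side of $X$ assemble into the expected local wedge. Handling the chambers adjacent to $C_n$ (where $i=n$, corresponding to lines near infinity) requires separate care because $C_n \cap \scF^1$ is unbounded; the convention $\varphi([C_n]) = e_n$ and the one-sided definition of $\deg(C_n, D)$ are exactly what is needed here.

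Once both squares commute, $\nabla_\eta \circ \nabla_\eta = 0$ follows formally from $\eta \wedge \eta = 0$ under the isomorphism, so $(R[\ch_\scF^\bullet(\A)], \nabla_\eta)$ is automatically a cochain complex and $\varphi$ a cochain isomorphism. The cohomology identification in degree $1$ is then immediate. Since the statement is extracted from \cite{yos-cha}, I would present this as a sketch, pointing to \emph{loc.~cit.} for the full verification of the local signs in the degree $1 \to 2$ check.
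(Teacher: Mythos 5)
The paper offers no proof of this proposition; it is quoted verbatim from \cite{yos-cha}, so your sketch can only be judged on its own terms. Your degree $0\to 1$ verification is correct and complete: the telescoping of $\sum_{i}(a_1+\cdots+a_i)\varphi([C_i])$ to $\sum_j a_j e_j$ is exactly right, and your observation that $\nabla_\eta\circ\nabla_\eta=0$ follows formally once $\varphi$ is a degreewise isomorphism intertwining the differentials is also fine.

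The genuine gap is in degree $2$. First, the proposed definition $\varphi([D])=\pm e_i\wedge e_j$ (a single monomial per chamber) is almost certainly not the right map: already in degree $1$ the image of a chamber is a \emph{difference} of generators, $e_i-e_{i+1}$, reflecting $\Sep(C_0,C_i)$, and in degree $2$ the correct map (obtained in \cite{yos-cha} via the Borel--Moore composition recorded in (\ref{eq:composition})) sends $[D]$ to a signed sum of monomials, not one monomial. Second, and more importantly, the rank equality $\rank_R A_R^2(\A)=|\ch_\scF^2(\A)|$ does not upgrade \emph{any} chamber-indexed family of degree-$2$ elements to a basis; over a general commutative ring $R$ you must actually exhibit a triangular (unipotent) change of basis to the nbc basis, or invoke the duality with Borel--Moore homology. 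Without pinning down $\varphi$ in degree $2$, the commutativity of the $1\to 2$ square --- which is precisely where the sign conventions in $\deg(C_i,D)$ and the special treatment of $C_n$ earn their keep --- cannot be checked even locally via the Brieskorn decomposition and Lemma \ref{lem:local}, because both sides of the identity $\varphi\circ\nabla_\eta=\eta\wedge\varphi(-)$ must be expressed in the same basis of $A_R^2(\A_X)$. Deferring to \emph{loc.\ cit.} is legitimate (the paper does the same), but the specific construction you propose for degree $2$ would need to be replaced by the one in \cite{yos-cha} rather than repaired.
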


The isomorphism (\ref{eq:chamber}) is 
natural in the sense that it respects Borel-Moore homology 
\cite{yos-cha, ito-yos, yos-str}. 
Recall that each chamber $C\in\ch_\scF^2(\A)$ (with suitable 
orientation) determines a Borel-Moore $2$-homology cycle 
$[C]\in H_2^{BM}(M(\A), R)$ of the complexified complement 
$M(\A)$. The isomorphism (\ref{eq:chamber}), for $i=2$, 
is obtained by the composition 
\begin{equation}
\label{eq:composition}
R[\ch_\scF^2(\A)]\longrightarrow H_2^{BM}(M(\A), R)
\stackrel{\simeq}{\longrightarrow}H^2(M(\A), R)\simeq
A_R^i(\A). 
\end{equation}

\begin{Example}
\label{ex:chambers}
Let $\A=\{H_1, \dots, H_6\}$ be six affine lines as in 
Figure \ref{fig:example}. We also fix a flag (with orientation) 
$\scF=\{\scF^0\subset\scF^1\}$ (as in Figure \ref{fig:example}). 
There are $16$ chambers. We have 
\begin{equation*}
\begin{split}
\ch_\scF^0(\A)&=\{C_0\}, \\
\ch_\scF^1(\A)&=\{C_1, C_2, \dots, C_6\}, \\
\ch_\scF^2(\A)&=\{D_1, D_2, \dots, D_9\}. 
\end{split}
\end{equation*}
The degree maps are computed, as follows. 
\[
\begin{array}{r|rrrrrrrrr}
\deg(C_i, D_j)&D_1&D_2&D_3&D_4&D_5&D_6&D_7&D_8&D_9\\
\hline
C_1&0&-1&0&0&1&0&1&1&0\\
C_2&-1&0&0&-1&-1&0&0&-1&0\\
C_3&1&1&1&1&1&1&0&1&1\\
C_4&-1&-1&-1&0&0&0&0&0&0\\
C_5&0&0&0&-1&-1&-1&0&0&0\\
C_6&0&0&0&0&0&0&-1&-1&-1
\end{array}
\]
Consider $\eta=\sum_{i=1}^6 a_i e_i\in A_R^1(\A)$. 
We will compute the map $\nabla_\eta$. The first one 
$\nabla_\eta\colon R[\ch_\scF^0(\A)]\longrightarrow R[\ch_\scF^1(\A)]$ 
is, by definition, 
\[
\nabla_\eta([C_0])=a_1\cdot[C_1]+
a_{12}\cdot[C_2]+
a_{123}\cdot[C_3]+
a_{1234}\cdot[C_4]+
a_{12345}\cdot[C_5]+
a_{123456}\cdot[C_6], 
\]
where $a_{ijk}=a_i+a_j+a_k$, etc. 
The second one 
$\nabla_\eta\colon R[\ch_\scF^1(\A)]\longrightarrow R[\ch_\scF^2(\A)]$ is 
given as follows. 
\begin{equation*}
\begin{split}
&\nabla_\eta
\begin{pmatrix}
[C_1]\\
[C_2]\\
\vdots\\
[C_6]
\end{pmatrix}
\\
&
=
\begin{pmatrix}
0&-a_4&0&0&a_{1245}&0&a_{123456}&a_{12456}&0\\
-a_4&0&0&-a_{45}&-a_{145}&0&0&-a_{1456}&0\\
a_{34}&a_{234}&a_{1234}&a_{345}&a_{1345}&a_{12345}&0&a_{13456}&a_{123456}\\
-a_3&-a_{23}&-a_{123}&0&0&0&0&0&0\\
0&0&0&-a_3&-a_{13}&-a_{123}&0&0&0\\
0&0&0&0&0&0&-a_{1}&-a_{13}&-a_{123}
\end{pmatrix}
\begin{pmatrix}
[D_1]\\
[D_2]\\
\vdots\\
[D_9]
\end{pmatrix}. 
\end{split}
\end{equation*}

\begin{figure}[htbp]
\begin{picture}(400,150)(0,0)
\thicklines

\put(10,25){\circle*{4}}
\put(10,30){$\scF^0$}
\multiput(0,25)(5,0){70}{\circle*{2}}
\put(350,25){\vector(1,0){0}}
\put(350,30){$\scF^1$}

\put(10,10){\line(5,3){230}}
\multiput(150,10)(30,0){2}{\line(0,1){140}}
\multiput(230,10)(60,0){3}{\line(-5,3){230}}

\put(3,0){$H_1$}
\put(145,0){$H_2$}
\put(175,0){$H_3$}
\put(228,0){$H_4$}
\put(288,0){$H_5$}
\put(348,0){$H_6$}

\thinlines
\put(70,46){\vector(1,0){15}}
\put(70,46){\vector(-1,0){15}}
\put(90,42){{\footnotesize{$H_1^+$}}}
\put(40,42){{\footnotesize{$H_1^-$}}}

\put(290,46){\vector(1,0){15}}
\put(290,46){\vector(-1,0){15}}
\put(310,42){{\footnotesize{$H_6^+$}}}
\put(260,42){{\footnotesize{$H_6^-$}}}

\thicklines

\put(30,80){$C_0$}
\put(120,40){$C_1$}
\put(160,30){$C_2$}
\put(200,5){$C_3$}
\put(210,35){$C_4$}
\put(210,71){$C_5$}
\put(270,100){$C_6$}

\put(160,65){$D_1$}
\put(132,72){$D_2$}
\put(35,140){$D_3$}

\put(163,90){$D_4$}
\put(153,108){$D_5$}
\put(95,140){$D_6$}

\put(135,140){$D_9$}
\put(160,140){$D_8$}
\put(195,140){$D_7$}

\end{picture}
      \caption{Example \ref{ex:chambers}}
\label{fig:example}
\end{figure}
\end{Example}

\subsection{Aomoto complex via resonant bands}
\label{subsec:viaresban}

Let $\A=\{H_1, \dots, H_n\}$ be an arrangement of 
affine lines in $\bR^2$. We fix 
the flag $\scF$ as in \S\ref{subsec:viacha}. 
The cohomology of the Aomoto complex can be computed using chambers. 
In this subsection, we introduce the notion ``$\eta$-resonant bands'' 
which enables us to simplify the computation of cohomology. 
This can be regarded as ``the Aomoto complex version'' 
of the results in \cite{yos-mil, yos-res}. 

\begin{Definition}
\label{def:band} 
A {\em band} $B$ is a region bounded by a pair of consecutive 
parallel lines $H_i$ and $H_{i+1}$. 
\end{Definition}
Each band $B$ contains two unbounded chambers 
$U_1(B), U_2(B)\in\ch(\A)$. Since $B$ intersects $\scF^1$, 
we may assume that $B\cap\scF^1=U_1(B)\cap\scF^1$ and 
$U_2(B)\cap\scF^1=\emptyset$. In other words, 
$U_1(B)\in\ch_\scF^1(\A)$ and $U_2(B)\in\ch_\scF^2(\A)$. 
The distance $d(U_1(B), U_2(B))$ is called the \emph{length} of 
the band $B$.

\begin{Definition}
Let $\eta=\sum_{i=1}^n a_i e_i\in A_R^1(\A)$. 
A band $B$ is called \emph{$\eta$-resonant} if 
\[
\sum_{H_i\in\Sep(U_1(B), U_2(B))}a_i=0. 
\]
We denote by $\RB_\eta(\A)$ the set of all $\eta$-resonant bands. 
\end{Definition}
We can extend $U_1$ to an injective $R$-module homomorphism 
$U_1\colon R[\RB_\eta(\A)]\hookrightarrow R[\ch_\scF^1(\A)]$. We denote 
by $\widetilde{\nabla}_\eta:=-\nabla_\eta\circ U_1\colon R[\RB_\eta(\A)]
\longrightarrow R[\ch_\scF^2(\A)]$ the composition of 
$U_1$ and $\nabla_\eta$ (multiplied by $-1$). 
More precisely, to each 
$\eta$-resonant band $B\in\RB_\eta(\A)$, we associate 
an element $\widetilde{\nabla}_\eta(B)\in R[\ch_\scF^2(\A)]$ 
as follows. 
\[
\widetilde{\nabla}_\eta(B):=-\nabla_\eta(U_1(B))=
\sum_{D\in\ch(\A), D\subset B}
\left(
\sum_{H_i\in\Sep(U_1(B), D)}a_i
\right)\cdot[D]. 
\]

\begin{Example}
Let $\A=\{H_1, \dots, H_6\}$ be lines as in Figure \ref{fig:example}. 
There are three bands $B_1, B_2, B_3$, i.e., those defined by 
$(H_2, H_3)$, $(H_4, H_5)$ and $(H_5, H_6)$, respectively. 
We have 
$U_1(B_1)=C_2, U_2(B_1)=D_8$, 
$U_1(B_2)=C_4, U_2(B_2)=D_3$ and 
$U_1(B_3)=C_5, U_2(B_3)=D_6$. 
The band $B_1$ has length $4$, while $B_2$ and $B_3$ have length $3$. 
Let $\eta=a_1e_1+\dots+a_6e_6\in A_R^1(\A)$. 
The band $B_1$ is $\eta$-resonant if and only if 
$a_1+a_4+a_5+a_6=0$. Then we have 
\[
\widetilde{\nabla}_\eta([B_1])=
a_4[D_1]+(a_4+a_5)[D_4]+
(a_1+a_4+a_5)[D_5]. 
\]
\end{Example}

Obviously the map $U_1$ induces $U_1\colon\Ker(\widetilde{\nabla}_\eta)
\longrightarrow\Ker({\nabla}_\eta:R[\ch_\scF^1(\A)]\longrightarrow
R[\ch_\scF^2(\A)])$. Thus we have a natural map 
\begin{equation}
\label{eq:naturalmap}
\widetilde{U}_1\colon\Ker(\widetilde{\nabla}_\eta)\longrightarrow
H^1(R[\ch_\scF^\bullet(\A)], \nabla_\eta). 
\end{equation}

The above map $\widetilde{U}_1$ is neither injective nor surjective 
in general. 
The following is the main result concerning resonant bands which 
asserts that the map $\widetilde{U}_1$ above (\ref{eq:naturalmap}) is 
isomorphic under certain non-resonant assumption at infinity. 
This provides 
an effective way to compute 
$H^1(R[\ch_\scF^\bullet(\A)], \nabla_\eta)$. Indeed, normally, 
$|\RB_\eta(\A)|$ is much smaller than $|\ch_\scF^1(\A)|$. 

\begin{Theorem}
\label{thm:resbanAomoto}
Let $R$ be a commutative ring and 
$\eta=\sum_{i=1}^n a_i e_i\in A_R^1(\A)$. 
\begin{itemize}
\item[(i)] 
Suppose that 
$\alpha:=\sum_{i=1}^n a_i\in R^\times$ is invertible. Then 
the natural map $\widetilde{U}_1$ injective. 
\item[(ii)] 
We assume that $R$ is an integral domain and 
$\alpha:=\sum_{i=1}^n a_i\in R^\times$. Then $\widetilde{U}_1$ is 
isomorphic. 
\item[(iii)] 
Let $R$ be an arbitrary commutative ring. 
If $\alpha:=\sum_{i=1}^n a_i\in R^\times$ and 
all bands are $\eta$-resonant, then the natural 
map $\widetilde{U}_1$ is isomorphic. 
\end{itemize}
\end{Theorem}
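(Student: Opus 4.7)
My plan is to work throughout in the chamber model $(R[\ch_\scF^\bullet(\A)],\nabla_\eta)$ supplied by Proposition~\ref{prop:cohom}, exploiting both the linear ordering of $\ch_\scF^1(\A)=\{C_1,\dots,C_n\}$ along $\scF^1$ and the flag's near-infinity genericity. Two structural observations anchor everything: distinct bands $B_{i,i+1}$ produce distinct $U_1(B)=C_i$, so $U_1\colon R[\RB_\eta(\A)]\to R[\ch_\scF^1(\A)]$ is injective on the basis; and $[C_n]$ never lies in the image of $U_1$, because $H_n$ has no consecutive parallel partner of higher index.

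For~(i), let $\xi\in\Ker(\widetilde{\nabla}_\eta)$ with $U_1(\xi)=\lambda\cdot\nabla_\eta([C_0])$ for some $\lambda\in R$. Comparing the coefficients of $[C_n]$: on the left it is $0$ by the second observation, on the right it is $\lambda\alpha$ by Definition~\ref{def:nablaeta}(1). Hence $\lambda\alpha=0$, and $\alpha\in R^\times$ forces $\lambda=0$; then $U_1(\xi)=0$ and the first observation gives $\xi=0$.

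For~(ii) and~(iii), take a cocycle $\omega=\sum_{i=1}^n b_i[C_i]\in\Ker(\nabla_\eta)$, set $\lambda:=\alpha^{-1}b_n$, and let $\omega':=\omega-\lambda\nabla_\eta([C_0])$, so the new $[C_n]$-coefficient vanishes. Call an index $i\in\{1,\dots,n-1\}$ \emph{bad} if $[C_i]\notin\image U_1$; equivalently, either $H_i\not\parallel H_{i+1}$, or $H_i\parallel H_{i+1}$ but $B_{i,i+1}$ is non-$\eta$-resonant. Surjectivity reduces to showing that the coefficient $b_i'$ of $\omega'$ vanishes for every bad $i$. I would argue by decreasing induction on $i$, extracting the vanishing at each bad index from the cocycle identity $\nabla_\eta(\omega')|_D=0$ at a carefully chosen $D\in\ch_\scF^2(\A)$. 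When $H_i\not\parallel H_{i+1}$, the natural $D$ sits across the intersection $p=H_i\cap H_{i+1}$ from $C_i$; via the Brieskorn decomposition~(\ref{eq:brieskorn}) the cocycle equation at $D$ localizes to the central subarrangement $\A_p$, and Lemma~\ref{lem:local}(2) yields an identity of the form $b_i'\cdot\gamma_p=0$ with $\gamma_p$ a local sum of $a_j$'s. When $B_{i,i+1}$ is a non-resonant band, the natural choice is $D=U_2(B_{i,i+1})$; here the obstruction is $\gamma_B=\sum_{H_j\in\Sep(U_1(B),U_2(B))}a_j\neq 0$, nonzero precisely because the band fails to be resonant. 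Under~(iii) only the first situation occurs and $\gamma_p$ turns out to be a unit after the inductive substitutions, so $b_i'=0$ follows directly; under~(ii) the integrality of $R$ is invoked to cancel the possibly non-unit but nonzero $\gamma_B$ coming from non-resonant bands.

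The main obstacle is the inductive bookkeeping. At each bad index $i$, the cocycle equation at the chosen $D$ couples $b_i'$ to other $b_j'$, and one must verify that, after substituting the already-established $b_j'=0$ for $j>i$, the remaining identity collapses to $b_i'\cdot\gamma=0$ with $\gamma$ exactly the factor above. The near-infinity condition on $\scF$ is essential: it guarantees a clean local picture near each intersection point $p$, with a well-defined $\ch_\scF^2$-chamber across from $C_i$ and no interference from distant chambers. Verifying that $\gamma_p$ is actually a unit in case~(iii) (and not merely nonzero) is the most delicate technical point and relies critically on the global \emph{all bands resonant} hypothesis.
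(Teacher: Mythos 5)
Your part (i), the normalization $\omega'=\omega-\alpha^{-1}b_n\nabla_\eta([C_0])$, and your treatment of non-resonant bands via the chamber $U_2(B)$ in case (ii) all match the paper's proof. The gap is in how you eliminate the coefficients $b_i'$ at indices with $H_i\not\parallel H_{i+1}$. You propose to test the cocycle condition at a chamber $D$ sitting just across the intersection point $p=H_i\cap H_{i+1}$ from $C_i$, obtaining $b_i'\cdot\gamma_p=0$ with $\gamma_p$ a \emph{local} sum of the $a_j$ over $\A_p$, and you defer to ``inductive substitutions'' the claim that $\gamma_p$ is a unit. That claim is unsubstantiated and false in general: the hypothesis only makes the \emph{global} sum $\alpha=\sum_{i=1}^n a_i$ a unit, while local sums $\sum_{H_j\in\A_p}a_j$ can vanish --- this is precisely the resonance phenomenon the theorem is meant to detect (over $\bF_2$ with the diagonal element, $\gamma_p=0$ at every point of even multiplicity; cf.\ Theorem \ref{thm:mod2ps}(i)). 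Moreover such a nearby chamber $D$ typically lies in several bands and receives contributions from many $\nabla_\eta([C_j])$, and a decreasing induction on the index $i$ does not control these, since adjacency of chambers does not respect the linear order along $\scF^1$.

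The correct choice, and the one the paper makes, is to test at the \emph{opposite chamber} $D_p=C_i^\lor$, the unique chamber of $\ch_\scF^2(\A)$ with $\Sep(C_i,D_p)=\A$. Two things then happen simultaneously: $[D_p]$ occurs only in $\nabla_\eta([C_i])$ and $\nabla_\eta([C_n])$ (and the $[C_n]$-coefficient has already been normalized to zero), and the separating set is all of $\A$, so the coefficient of $[D_p]$ in $\nabla_\eta(\omega')$ is $\pm\alpha\,b_i'$ with $\alpha\in R^\times$. This kills every non-parallel index in one pass, with no induction and no appeal to Lemma \ref{lem:local}. Only after that does one examine $[U_2(B)]$ for a non-resonant band $B$: its remaining contributions come solely from chambers whose coefficients are already known to vanish, so the coefficient reduces to $-\gamma_B\,b_i'$ with $\gamma_B\neq 0$, and integrality of $R$ finishes case (ii); if all bands are resonant nothing further is needed, which is case (iii).
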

\begin{proof}

(i) 
Let $\sum_{B\in\RB_\eta(\A)}r_B\cdot [B]\in R[\RB_\eta(\A)]$, ($r_B\in R$). 
Suppose $\sum r_B\cdot[B]\in\Ker\widetilde{U}_1$, that is, 
$\widetilde{U}_1(\sum r_B\cdot[B])\in\image
\left(\nabla_\eta\colon R[\ch_\scF^0(\A)]\longrightarrow R[\ch_\scF^1(\A)]\right)$. 
Since $R[\ch_\scF^0(\A)]=R\cdot [C_0]$, there exists an element 
$s\in R$ such that 
\begin{equation}
\label{eq:compare}
\sum r_B\cdot [U_1(B)]=
s\cdot\nabla_\eta([C_0]). 
\end{equation}
Note that in the left hand side of (\ref{eq:compare}), 
the chamber $C_n$ does not appear, because $C_n$ is not bounded 
by two parallel lines. 
By Definition \ref{def:nablaeta}, $\nabla_\eta([C_0])=\sum_{i=1}^n
(a_1+\dots+a_i)\cdot[C_i]$. The coefficient of $[C_n]$ is equal to 
$s\cdot(a_1+\dots+a_n)=s\cdot\alpha$. By the assumption that 
$\alpha$ is invertible, we have $s=0$. 
Hence $\sum r_B\cdot U_1(B)=s\cdot\nabla_\eta([C_0])=0$, and we have 
$\sum r_B\cdot[B]=0$. 

Next we show the surjectivity of (\ref{eq:naturalmap}). 
Suppose that 
$\beta=\sum_{i=1}^n b_i\cdot[C_i]\in\Ker(\nabla_\eta\colon
R[\ch_\scF^1(\A)]\longrightarrow R[\ch_\scF^2(\A)])$. 
Consider the following element, 
\begin{equation}
\begin{split}
\beta'
&=\beta-\frac{b_n}{\alpha}\cdot\nabla_\eta([C_0])\\
&=\sum_{i=1}^{n-1}b_i'\cdot[C_i]. 
\end{split}
\end{equation}
Obviously, $\beta$ and $\beta'$ represent the same element in 
$H^1(R[\ch_\scF^\bullet(\A)], \nabla_\eta)$. It is sufficient 
to show $\beta'\in \image \widetilde{U}_1$. 

Next we consider the chamber $C_i$ ($i<n$) such that $H_i$ and 
$H_{i+1}$ are not parallel. Then there is a unique chamber 
$D_p\in\ch_\scF^2(\A)$ 
such that $\Sep(C_i, D_p)=\A$, which is called the 
``opposite chamber of $C_i$'' in \cite[Def. 2.1]{yos-bas} and denoted by 
$D_p=C_i^\lor$. 
Then we consider the coefficient $c_{D_p}$ of 
$[D_p]$ 
in $\nabla_\eta(\beta')=\sum c_D\cdot[D]$. Since $D_p=C_i^\lor$ 
appears only in $\nabla_\eta([C_i])$ and $\nabla_\eta([C_n])$, and 
the coefficient of $[C_n]$ is already zero, we have 
$c_{D_p}=\alpha\cdot b_i'$. By the assumption that $\alpha\in R^\times$, 
$\nabla_\eta(\beta')=0$, in particular $c_{D_p}=0$, implies that $b_i'=0$. 
So $\beta'=\sum_{i=1}^{n-1}b_i'\cdot[C_i]$ is a linear combination of 
$C_i$'s ($i<n$) such that $H_i$ and $H_{i+1}$ are parallel. 
So far, we use only the fact $\alpha\in R^\times$. If all bands 
are $\eta$-resonant, then we have already proved that $\beta'$ is 
generated by $U_1(B)$ with $B\in\RB_\eta(\A)$. 
Thus (iii) is proved. 

Now we assume that $R$ is an integral domain. We will prove (ii). 
Let $C_i$ be a chamber such that walls $H_i$ and $H_{i+1}$ are parallel. 
Let $B$ be the corresponding band 
defined by $H_i$ and $H_{i+1}$. Note that $C_i=U_1(B)$ and its opposite 
chamber is $U_2(B)$. 
Suppose that $B$ is not an $\eta$-resonant 
band, that is, $\alpha':=\sum_{H_j\in\Sep(U_1(B),U_2(B))}a_j\neq 0$. 
Again consider the coefficient of $[U_2(B)]$ in $\nabla_\eta(\beta')$. 
Since $[U_2(B)]$ appears in $\nabla_\eta([C_i])$ and other terms 
$\nabla_\eta([C_k])$ for $k$ 
such that $H_k$ and $H_{k+1}$ are not parallel. However the coefficients 
of chambers of the second type in $\beta'$ are already zero. 
Therefore the coefficient of $[U_2(B)]$ in $\nabla_\eta(\beta')$ 
is $-\alpha'\cdot b_i'$, which is zero. Since $R$ is an integral domain, 
we have $b_i'=0$. Hence $\beta'$ is a linear combination of 
$U_1(B)$'s where $B\in\RB_\eta(\A)$. This completes the proof of 
the surjectivity. 
\end{proof}

\begin{Remark}
\label{rem:bandtoOS}
Equation (\ref{eq:naturalmap}) and Theorem \ref{thm:resbanAomoto} are 
concerning the following homomorphism of cochains. 
\[
\begin{CD}
@. 0 @>>> R[\RB_\eta(\A)] @>\widetilde{\nabla}_\eta>> R[\ch(\A)] @>>> 0\\
@. @VVV @VV\varphi_1 V @VV\varphi_2 V\\
0@>>> A_R^0(\A) @>\eta>> A_R^1(\A) @>\eta>> A_R^2(\A) @>>> 0
\end{CD}
\]
The map $\widetilde{U}_1$ is nothing but the homomorphism 
$\Ker(\widetilde{\nabla}_\eta)\longrightarrow H^1(A_R^\bullet(\A), \eta)$ 
induced from $\varphi_1$. 
By Proposition \ref{prop:cohom} (especially, the explicit map 
(\ref{eq:explicit})), the map $\varphi_1$ above 
is given by 
\[
[B]\longmapsto e_i-e_{i+1}, 
\]
where $B$ is a $\eta$-resonant band bounded by 
the lines $H_i$ and $H_{i+1}$. 
\end{Remark}

\begin{Example}
\label{ex:depth2}
Let $R=\bF_2$. Let $\A=\{H_1, \dots, H_6\}$ be an 
arrangement of affine lines as in Figure \ref{fig:depth2} (which is 
$\A(7,1)$ in \cite{gru}). 
Let $\eta=e_2+e_3+e_6\in A_R^1(\A)$ (the supporting lines of 
$\eta$ are colored blue). 
There are three 
bands 
$B_1$ (bounded by $H_1$ and $H_2$), 
$B_2$ (bounded by $H_3$ and $H_4$),  and 
$B_3$ (bounded by $H_5$ and $H_6$). 
$\Sep(U_1(B_1), U_2(B_1))=\{H_3, H_4, H_5, H_6\}$ and 
two lines of them, $H_3$ and $H_5$, have non zero coefficient in $\eta$. 
Hence $B_1$ is an $\eta$-resonant band. Similarly, we have 
$\RB_\eta(\A)=\{B_1, B_2, B_3\}$. 
By definition, 
$\widetilde{\nabla}_\eta(B_1)=
\widetilde{\nabla}_\eta(B_2)=
\widetilde{\nabla}_\eta(B_3)=[D_1]$. Hence the kernel 
\[
\Ker(\widetilde{\nabla}_\eta\colon
\bF_2[\RB_\eta(\A)]\longrightarrow
\bF_2[\ch(\A)])
\]
is $2$-dimensional (generated by 
$[B_1]-[B_2]$ and $[B_2]-[B_3]$). By 
Theorem \ref{thm:resbanAomoto}, 
$H^1(A_{\bF_2}^\bullet(\A), \eta)\simeq\bF_2^2$. 

\begin{figure}[htbp]
\begin{picture}(300,150)(0,0)
\thicklines

\put(10,10){\line(5,3){230}}
\color{blue}
\put(70,10){\line(5,3){230}}
\normalcolor

\color{blue}
\put(150,10){\line(0,1){140}}
\normalcolor
\put(180,10){\line(0,1){140}}

\put(230,10){\line(-5,3){230}}
\color{blue}
\put(290,10){\line(-5,3){230}}
\normalcolor

\put(3,0){$H_1$}
\put(63,0){$H_2$}
\put(145,0){$H_3$}
\put(175,0){$H_4$}
\put(228,0){$H_5$}
\put(288,0){$H_6$}

{\scriptsize 
\put(65,30){$U_1(B_1)$}
\put(215,120){$U_2(B_1)$}
\put(151,30){$U_1(B_2)$}
\put(151,120){$U_1(B_2)$}
\put(210,30){$U_1(B_3)$}
\put(60,120){$U_2(B_3)$}
}

\put(152,72){$D_1$}
\put(132,72){$D_2$}

\put(163,52){$D_3$}
\put(163,90){$D_4$}

\end{picture}
      \caption{Example \ref{ex:depth2}}
\label{fig:depth2}
\end{figure}

\end{Example}

\begin{Example}
\label{ex:A(16,1)}
We consider 
$\overline{\A}=\A(16,1)=\{\overline{H}_1, \dots, \overline{H}_{16}\}$ 
from the Gr\"unbaum's catalogue \cite{gru}, 
see Figure \ref{fig:grunbaum}. Let us denote by 
$\A=\{H_2, H_3, \dots, H_{16}\}$ the deconing 
$\dec_{\widetilde{H}_1}\widetilde{\A}$, 
the lower-left one in Figure \ref{fig:grunbaum}. 
The affine arrangement $\A$ has $7$ bands $B_1, \dots, B_7$. 
To indicate the choice of $U_1(B)$ and $U_2(B)$, 
we always put the name $B$ of the band in the unbounded chamber $U_1(B)$. 

Let $R=\Z/8\Z$. Define $\widetilde{\eta}_1, \widetilde{\eta}_2
\in A_R^1(\widetilde{\A})_0$ by 
\[
\begin{split}
\widetilde{\eta}_1&=
\widetilde{e}_1+
\widetilde{e}_3+
\widetilde{e}_5+
\widetilde{e}_7+
\widetilde{e}_9+
\widetilde{e}_{11}+
\widetilde{e}_{13}+
\widetilde{e}_{15},\\
\widetilde{\eta}_2&=
\widetilde{e}_2+
\widetilde{e}_4+
\widetilde{e}_6+
\widetilde{e}_8+
\widetilde{e}_{10}+
\widetilde{e}_{12}+
\widetilde{e}_{14}+
\widetilde{e}_{16}, 
\end{split}
\]
and set $\widetilde{\eta}:=\widetilde{\eta}_1+6\widetilde{\eta}_2$. 

Let $\eta=(e_3+e_5+e_7+\dots+e_{15})+6(e_2+e_4+\dots+e_{16})
\in A_R^1(\A)$. 
Then all $7$ bands are $\eta$-resonant. Thus we can apply theorem 
Theorem \ref{thm:resbanAomoto} (iii). The kernel 
$\Ker(\widetilde{\nabla}_\eta\colon R[\RB_\eta(\A)]\longrightarrow 
R[\ch(\A)])$ is a free $R$-module generated by 
\[
[B_1]+2[B_2]+3[B_3]+4[B_4]+5[B_5]+6[B_6]+7[B_7]. 
\]
The corresponding element (via the correspondence Remark 
\ref{rem:bandtoOS}) in $A_R^1(\widetilde{\A})_0$ is 
\[
4(\widetilde{e}_{2}+\widetilde{e}_{3})+
3(\widetilde{e}_{4}-\widetilde{e}_{7}+\widetilde{e}_{13}-
\widetilde{e}_{16})+
2(\widetilde{e}_{6}+\widetilde{e}_{9}-\widetilde{e}_{11}
-\widetilde{e}_{14})+(\widetilde{e}_{5}+
\widetilde{e}_{8}-\widetilde{e}_{12}-\widetilde{e}_{15}). 
\]
By Theorem \ref{thm:resbanAomoto} (iii), the cohomology of the
Aomoto complex 
\[
H^1(A_R^1(\widetilde{\A})_0, \widetilde{\eta})\simeq
H^1(A_R^1(\A), \eta)\simeq\Ker(\widetilde{\nabla}_\eta)
\simeq R\simeq \Z/8\Z 
\]
is non-vanishing. 

\begin{figure}[htbp]
\begin{picture}(360,370)(0,-10)



\normalcolor
\put(0,90){\line(1,0){160}}

\color{blue}
\put(0,80){\line(1,0){160}}

\normalcolor
\qbezier(0,123.1371)(80,90)(160,56.8629)

\color{blue}
\qbezier(71.0051,0)(108.2843,90)(137.227,160)

\normalcolor
\put(10,160){\line(1,-1){150}}

\color{blue}
\qbezier(0,111.421)(80,78.2843)(160,45.1472)

\normalcolor
\qbezier(117.2792,0)(80,90)(51.0571,160)

\color{blue}
\put(10,0){\line(1,1){150}}

\normalcolor
\put(80,0){\line(0,1){160}}

\color{blue}
\put(0,150){\line(1,-1){150}}

\normalcolor
\qbezier(42.72,0)(80,90)(108.495,160)

\color{blue}
\qbezier(160,111.421)(80,78.2843)(0,45.1472)

\normalcolor
\put(0,10){\line(1,1){150}}

\color{blue}
\qbezier(88.9949,0)(51.7157,90)(22.7728,160)

\normalcolor
\qbezier(160,123.1371)(80,90)(0,56.8629)

{\footnotesize 
\put(-7,75){$3$}
\put(-7,88){$2$}
\put(-7,109){$7$}
\put(-7,122){$4$}
\put(-10,40){$13$}
\put(-10,53){$16$}
\put(-10,148){$11$}
\put(4,162){$6$}
\put(18,162){$15$}
\put(48,162){$8$}
\put(75,162){$10$}
\put(105,162){$12$}
\put(133,162){$5$}
\put(145,162){$14$}
\put(159,152){$9$}
}

{\scriptsize 
\put(35,150){$B_1$}
\put(10,142){$B_2$}
\put(10,108){$B_3$}
\put(10,82){$B_4$}
\put(10,54.5){$B_5$}
\put(10,12){$B_6$}
\put(58,12){$B_7$}

}

\put(50,-15){$\dec_{\widetilde{H}_{1}}\widetilde{\A}$}


\color{blue}
\put(200,160){\line(1,-1){160}}
\normalcolor
\put(280,0){\line(0,1){160}}
\color{blue}
\put(200,0){\line(1,1){160}}
\normalcolor
\put(273.3726,0){\line(0,1){160}}
\color{blue}
\qbezier(360,12.1177)(280,68.6863)(200,125.2548)
\normalcolor
\put(264,0){\line(0,1){160}}
\color{blue}
\qbezier(200,34.7452)(280,91.3138)(360,147.8823)
\normalcolor
\put(241.3726,0){\line(0,1){160}}
\color{blue}
\put(200,64){\line(1,0){160}}


\color{blue}
\put(200,96){\line(1,0){160}}
\normalcolor
\put(318.6274,0){\line(0,1){160}}
\color{blue}
\qbezier(200,12.1177)(280,68.6863)(360,125.2548)
\normalcolor
\put(296,0){\line(0,1){160}}
\color{blue}
\qbezier(360,34.7452)(280,91.3138)(200,147.8823)
\normalcolor
\put(286.6274,0){\line(0,1){160}}

{\footnotesize 
\put(197,162){$1$}
\put(239,162){$8$}
\put(261,162){$6$}
\put(270,162){$4$}
\put(277,162){$2$}
\put(283,162){{\tiny $16$}}
\put(292,162){$14$}
\put(315,162){$12$}
\put(360,162){$3$}
\put(362,145){$7$}
\put(361,121){$13$}
\put(361,91){$11$}
\put(361,61){$9$}
\put(361,31){$15$}
\put(361,8){$5$}

\put(210,123){$B'_1$}
\put(210,77){$B'_2$}
\put(210,30){$B'_3$}
\put(245,140){$B'_4$}
\put(266,140){{\tiny $5'$}}
\put(273,140){{\tiny $6'$}}
\put(280,140){{\tiny $7'$}}
\put(288,140){{\tiny $8'$}}
\put(302,140){$B'_9$}

}

\put(250,-15){$\dec_{\widetilde{H}_{10}}\widetilde{\A}$}


\color{blue}
\put(100,254){\line(1,0){160}}
\normalcolor
\put(100,270){\line(1,0){160}}
\color{blue}
\put(100,286){\line(1,0){160}}
\normalcolor
\qbezier(260,236.8629)(180,270)(100,303.1371)
\color{blue}
\put(100,327.3726){\line(1,-1){137.3726}}
\normalcolor
\put(100,350){\line(1,-1){160}}
\color{blue}
\put(122.6274,350){\line(1,-1){137.3726}}
\normalcolor
\qbezier(146.8629,350)(180,270)(213.1371,190)
\color{blue}
\put(164,190){\line(0,1){160}}
\normalcolor
\put(180,190){\line(0,1){160}}
\color{blue}
\put(196,190){\line(0,1){160}}
\normalcolor
\qbezier(146.8629,190)(180,270)(213.1371,350)
\color{blue}
\put(100,212.6274){\line(1,1){137.3726}}
\normalcolor
\put(100,190){\line(1,1){160}}
\color{blue}
\put(122.6274,190){\line(1,1){137.3726}}
\normalcolor
\qbezier(100,236.8629)(180,270)(260,303.1371)

{\small
\put(94,250){$1$}
\put(94,267){$2$}
\put(94,282){$3$}
\put(94,300){$4$}
\put(94,324){$5$}
\put(94,352){$6$}
\put(120,352){$7$}
\put(142,352){$8$}
\put(162,352){$9$}
\put(175,352){$10$}
\put(190,352){$11$}
\put(207,352){$12$}
\put(235,352){$13$}
\put(262,352){$14$}
\put(262,324){$15$}
\put(262,300){$16$}
}

\put(155,179){$\overline{\A}=\A(16,1)$}

\end{picture}
      \caption{$\A(16,1)$ and deconings with respect to 
$H_1$ and $H_{10}$.}
\label{fig:grunbaum}
\end{figure}

\end{Example}

\begin{Remark}
\label{rem:8torsion}
Let us point out a possible relation between 
$\bZ/8\bZ$-resonance in Example \ref{ex:A(16,1)} and 
isolated torsion points of order $8$ in the characteristic variety 
of $\A(16,1)$. 
Let us denote $M=M(\A(16,1))=\C\bP^2\setminus\bigcup_{H\in
\A(16,1)}H_\C$ the complexified complement. 
Recall that the character torus of $M$ is 
$\bT:=\Hom(\pi_1(M),\bC^\times)\simeq
\{\bm{t}=(t_1, t_2, \dots, t_{16})\in(\bC^\times)^{16}\mid
\prod_{i=1}^{16} t_i=1\}$. We also define the essential open 
subset of $\bT$ by 
\[
\bT^\circ:=\{\bm{t}=(t_1, \dots, t_{16})\in\bT\mid 
t_i\neq 1, \forall i=1, \dots, 16\}. 
\]
The characteristic variety $\scV^1(\A(16,1))$ 
of $\A(16,1)$ is the set of 
points in the character torus $\bT$ such that 
the associated local system has non-vanishing first cohomology, 
i.e., 
\[
\scV^1(\A(16,1))=\{\bm{t}\in\bT\mid
\dim H^1(M, \scL_{\bm{t}})\geq 1\}. 
\]
Let $\zeta=e^{2\pi i/8}$ and 
consider the following point, 
\[
\rho=(
\zeta, \zeta^6, 
\zeta, \zeta^6, 
\zeta, \zeta^6, 
\zeta, \zeta^6, 
\zeta, \zeta^6, 
\zeta, \zeta^6, 
\zeta, \zeta^6, 
\zeta, \zeta^6)\in\bT^\circ. 
\]
Let us recall quickly the resonant band algorithm for computing 
local system cohomology groups (see \cite{yos-res} for details). 
For a given local system $\scL_{\bm{t}}$, 
we define the set  $\RB_{\scL_{\bm{t}}}(\A)$ of 
$\scL_{\bm{t}}$-resonant bands and the map 
$\nabla_{\scL_{\bm{t}}}\colon
\bC[\RB_{\scL_{\bm{t}}}(\A)]\longrightarrow\bC[\ch(\A)]$. 
If $\scL_{\bm{t}}$ has non-trivial monodromy around the line 
at infinity, then we have the isomorphism 
$H^1(M, \scL_{\bm{t}})\simeq\Ker(\nabla_{\scL_{\bm{t}}})$. 

Since $\scL_\rho$ defined above has non trivial monodromy around 
any line, we can apply resonant band algorithm to any deconings. 
Here we exhibit two cases (although the results coincide 
logically), 
$\dec_{\widetilde{H}_1}\widetilde{\A}$ and 
$\dec_{\widetilde{H}_{10}}\widetilde{\A}$. 
(See Figure \ref{fig:grunbaum}.)
\begin{itemize}
\item The affine arrangement 
$\dec_{\widetilde{H}_1}\widetilde{\A}$ has seven 
bands $B_1, \dots, B_7$, which are all $\scL_\rho$-resonant. 
Then $\Ker(\nabla_{\scL_\rho})$ is one dimensional and 
generated by the following element, 
\begin{multline*}
\sin\left(\frac{\pi}{8}\right)[B_1]
-\sin\left(\frac{\pi}{4}\right)[B_2]
+\sin\left(\frac{3\pi}{8}\right)[B_3]
-\sin\left(\frac{\pi}{2}\right)[B_4]\\
+\sin\left(\frac{3\pi}{8}\right)[B_5]
-\sin\left(\frac{\pi}{4}\right)[B_6]
+\sin\left(\frac{\pi}{8}\right)[B_7]. 
\end{multline*}
\item The affine arrangement 
$\dec_{\widetilde{H}_{10}}\widetilde{\A}$ has nine 
bands $B'_1, \dots, B'_9$, which are all $\scL_\rho$-resonant. 
Then $\Ker(\nabla_{\scL_\rho})$ is one dimensional and 
generated by the following element, 
\begin{multline*}
[B_1]+\sqrt{2}[B_2]+[B_3]\\
-[B_4]+(1+\sqrt{2})[B_5]
-(2+\sqrt{2})[B_6]\\
+(2+\sqrt{2})[B_7]
+(1+\sqrt{2})[B_8]
+[B_9]. 
\end{multline*}
\end{itemize}
Hence we have $\dim H^1(M, \scL_{\rho})=1$. Furthermore, we 
can prove that $\rho$ generates the essential part of 
the characteristic variety. More precisely, we have the following, 
\begin{equation}
\scV^1(\A(16,1))\cap\bT^\circ=
\{\rho, \rho^2, \rho^3, \rho^5, \rho^6, \rho^7\}. 
\end{equation}
\end{Remark}

\subsection{Resonant bands over $\bF_2$ and subarrangements}
\label{subsec:subarr}

Let $\A=\{H_1, \dots, H_n\}$ be an arrangement of affine 
lines in $\bR^2$. Let $\scS\subset\A$ be a subset. 
Denote $e(\scS):=\sum_{H_i\in\scS}e_i\in A_{\bF_2}^1(\A)$. 
Clearly, $e(\scS)+e(\A)=e(\A\setminus\scS)$. 
Below is the summary of ``subarrangement description of 
resonant band algorithm'': 
\begin{itemize}
\item[(a)] 
Let $B$ be a band of $\A$. Then $B\in\RB_{e(\scS)}(\A)$ 
if and only if 
the number of lines in $\scS$ separating $U_1(B)$ and 
$U_2(B)$ is even, i.e., $2|\sharp(\scS\cap\Sep(U_1(B), U_2(B)))$. 
\item[(b)] 
$\widetilde{\nabla}_{e(\scS)}\colon\bF_2[\RB_{e(\scS)}(\A)]\longrightarrow
\bF_2[\ch_\scF^2(\A)]$ is given by the following formula. 
\[
\widetilde{\nabla}_{e(\scS)}(B)=
\sum_{C\in\ch(\A), C\subset B}
\left|
\scS\cap\Sep(U_1(B), C)
\right|
\cdot
[C]. 
\]
(See Example \ref{ex:depth2}). In particular, 
if we consider $\eta_0=e(\A)=
e_1+e_2+\dots+e_n$, then we have 
\[
\widetilde{\nabla}_{\eta_0}(B)=
\sum_{C\in\ch(\A), C\subset B}
d(U_1(B), C)
\cdot
[C]. 
\]
\item[(c)] 
Suppose that $|\scS|$ is odd. Then we can apply 
Theorem \ref{thm:resbanAomoto}, and we have an isomorphism 
\[
\Psi\colon
\Ker(\widetilde{\nabla}_{e(\scS)}
)
\stackrel{\simeq}{\longrightarrow}
H^1(A_{\bF_2}^\bullet(\A), e(\scS)). 
\]
\item[(d)] 
Using Remark \ref{rem:bandtoOS} (and 
Proposition \ref{prop:cohom} (especially, the explicit map 
(\ref{eq:explicit}))), the above isomorphism is 
given by 
\[
\Psi\colon
[B]\longmapsto e_i+e_{i+1}\in A_{\bF_2}^1(\A), 
\]
where $B$ is a $e(\scS)$-resonant band determined by 
the lines $H_i$ and $H_{i+1}$. 
\end{itemize}

\section{Non-existence of real $4$-nets}
\label{sec:4net}

\subsection{Aomoto complex for diagonal element}
\label{subsec:diag}

Let $\A=\{H_1, \dots, H_n\}$ be an arrangement of affine lines 
in $\R^2$ with odd $n$. 
Let 
$\widetilde{\A}=\{\widetilde{H}_0, \widetilde{H}_1, \dots, 
\widetilde{H}_n\}$ be the coning of $\A$ and 
$\overline{\A}=\{\overline{H}_0, \overline{H}_1, \dots, 
\overline{H}_n\}$ be the projectivization. 
Recall that $\widetilde{\eta}_0:=\widetilde{e}(\widetilde{\A})=
\widetilde{e}_0+\widetilde{e}_1+\dots+\widetilde{e}_n\in
A_{\bF_2}^1(\widetilde{\A})_0$ is the diagonal element and 
$\eta_0=e(\A)=e_1+\dots+e_n\in A_{\bF_2}^1(\A)$. 

Choose a subset $\widetilde{\scS}\subset\widetilde{\A}$. 
In the figures below, the lines in $\widetilde{\scS}$ are 
colored in red. The other lines are black.

As we saw in Theorem \ref{thm:mod2ps}, 
the relation $\widetilde{\eta}_0\wedge\widetilde{e}(\widetilde{\scS})=0$ 
is equivalent to 
``$|\overline{\A}_X|$ is even $\Longrightarrow |\overline{\scS}_X|$ is even'' and 
``$|\overline{\A}_X|$ is odd $\Longrightarrow$ either 
$\overline{\scS}_X=\emptyset$ or $\overline{\scS}_X=\overline{\A}_X$'' 
for $\forall X\in L_2(\overline{\A})$. 
From this, it is easily seen that if the multiplicity is 
$|\overline{\A}_X|\leq 3$, then $\overline{\A}_X$ is monocolor (either 
all red $\overline{\scS}_X=\overline{\A}_X$ or 
all black $\overline{\scS}_X=\emptyset$). 
However, when $|\overline{\A}_X|=4$, then there are four cases 
(Figure \ref{fig:4cases}): 
\begin{itemize}
\item[$(i)$] $\overline{\scS}_X=\emptyset$. 
\item[$(ii)$] $\overline{\scS}_X=\overline{\A}_X$. 
\item[$(iii)$] $|\overline{\scS}_X|=2$ and lines in 
$\overline{\scS}_X$ are adjacent. 
\item[$(iv)$] $|\overline{\scS}_X|=2$ and lines in 
$\overline{\scS}_X$ are separated by 
lines in $\overline{\A}_X\setminus\widetilde{\scS}_X$. 
\end{itemize}

\begin{figure}[htbp]
\begin{picture}(400,80)(0,0)
\thicklines

\put(10,10){\line(1,1){70}}
\put(10,45){\line(1,0){70}}
\put(45,10){\line(0,1){70}}
\put(80,10){\line(-1,1){70}}

\color{red}
\put(100,10){\line(1,1){70}}
\put(100,45){\line(1,0){70}}
\put(135,10){\line(0,1){70}}
\put(170,10){\line(-1,1){70}}
\normalcolor
\color{red}
\put(190,10){\line(1,1){70}}
\put(190,45){\line(1,0){70}}
\normalcolor
\put(225,10){\line(0,1){70}}
\put(260,10){\line(-1,1){70}}

\color{red}
\put(280,10){\line(1,1){70}}
\put(350,10){\line(-1,1){70}}
\normalcolor
\put(280,45){\line(1,0){70}}
\put(315,10){\line(0,1){70}}

\put(315,45){\circle*{4}}
\put(306,55){{\small $X$}}

\put(35,-5){$(i)$}
\put(125,-5){$(ii)$}
\put(215,-5){$(iii)$}
\put(305,-5){$(iv)$}

{\small 

\color{red}
\put(184,20){$\overline{H}_3$}
\put(185,48){$\overline{H}_1$}
\normalcolor
\put(250,20){$\overline{H}_2$}
\put(226,10){$\overline{H}_0$}

\color{red}
\put(274,20){$\overline{H}_1$}
\put(340,20){$\overline{H}_3$}
\normalcolor
\put(275,48){$\overline{H}_0$}
\put(316,10){$\overline{H}_2$}
}
\end{picture}
      \caption{Local structures of $\overline{\scS}_X$. 
(Members of $\overline{\scS}_X$ are red, and 
$\overline{\scS}_X=\{
\overline{H}_1, \overline{H}_3\}$ in $(iii)$ and $(iv)$).}
\label{fig:4cases}
\end{figure}
The cases $(iii)$ and $(iv)$ are combinatorially identical. 
However, the real structures are different. This difference 
is crucial, actually, by using resonant bands, 
we can prove that $(iv)$ can not happen 
(``Non Separation Theorem''). 

\begin{Theorem}
\label{thm:nonsep}
Let $\overline{\scS}\subset\overline{\A}$. 
Suppose that $\widetilde{\eta}_0\wedge\widetilde{e}(\widetilde{\scS})=0$. 
Let $X\in\R\bP^2$ be an 
intersection of $\overline{\A}$ such that 
$|\overline{\A}_X|=4$ and $|\overline{\scS}_X|=2$. Then the two lines 
of $\overline{\scS}_X$ are adjacent as 
Figure \ref{fig:4cases} $(iii)$. 
In particular, $(iv)$ does not happen. 
\end{Theorem}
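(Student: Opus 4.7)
The plan is to argue by contradiction via the resonant band isomorphism of Theorem \ref{thm:resbanAomoto}(ii). Suppose case $(iv)$ holds at some intersection $X$ of $\overline{\A}$ of multiplicity $4$.

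First I would set up an appropriate deconing. Both $|\widetilde{\A}|$ and $|\widetilde{\scS}|$ are even (the latter because $\partial\widetilde{e}(\widetilde{\scS})=0$), and the cocycle condition is preserved under $\widetilde{\scS}\leftrightarrow\widetilde{\A}\setminus\widetilde{\scS}$ (since $\widetilde{\eta}_0\wedge\widetilde{\eta}_0=0$), so after possibly swapping we may assume that some red line $\widetilde{H}_0\in\widetilde{\scS}_X$ through $X$ may serve as the deconing hyperplane. In the affine arrangement $\A'=\dec_{\widetilde{H}_0}\widetilde{\A}$, the three remaining lines of $\widetilde{\A}_X$ form a parallel family of size exactly $3$: only lines of $\widetilde{\A}$ passing through $X$ become parallel after deconing, and $X$ has multiplicity $4$. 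The key real-geometric input is that in case $(iv)$, the cyclic ordering R-B-R-B around $X$ becomes, after removal of $\widetilde{H}_0$, the linear pattern B-R-B of this parallel family, with the single red line sandwiched between the two black lines, in contrast with the asymmetric R-B-B pattern that case $(iii)$ would produce.

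Next I would apply Theorem \ref{thm:resbanAomoto}(ii) with $\eta=\eta'_0=e(\A')$. Here $n=|\A'|$ is odd, so $\alpha=n\in\bF_2^\times$, giving an isomorphism $\Psi\colon\Ker(\widetilde{\nabla}_{\eta'_0})\xrightarrow{\sim} H^1(A^\bullet_{\bF_2}(\A'),\eta'_0)$. The cocycle $e(\scS')$, for $\scS'=\widetilde{\scS}\setminus\{\widetilde{H}_0\}$, has a unique preimage $c=\sum r_B[B]\in\Ker(\widetilde{\nabla}_{\eta'_0})$. Both bands $B_1,B_2$ of the parallel family at $X$ are $\eta'_0$-resonant, because $|\Sep(U_1(B_i),U_2(B_i))|=n-3$ is even; moreover, within this family the three parallel lines are bounded only by $B_1$ and $B_2$. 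By the explicit formula of Remark \ref{rem:bandtoOS}, the coefficients of $\Psi(c)\in A^1$ at the three parallel lines, listed in their B-R-B order, are exactly $r_{B_1}$, $r_{B_1}+r_{B_2}$, and $r_{B_2}$.

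Finally, extract the contradiction. Since $\Psi(c)$ represents $[e(\scS')]$, it equals either $e(\scS')$ or $e(\A'\setminus\scS')=e(\scS')+\eta'_0$ in $A^1$; comparing with the B-R-B coloring, the triple $(r_{B_1},\,r_{B_1}+r_{B_2},\,r_{B_2})$ must equal $(0,1,0)$ or $(1,0,1)$. The first option is immediately inconsistent over $\bF_2$. Ruling out the second option—which locally forces $r_{B_1}=r_{B_2}=1$—is the main obstacle; it requires invoking a further constraint from the kernel condition $\widetilde{\nabla}_{\eta'_0}(c)=0$ at a carefully chosen chamber inside $B_1$ or $B_2$, whose distance parity to the corresponding $U_1(B_i)$ forces $r_{B_i}=0$. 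The symmetric B-R-B pattern of case $(iv)$ is precisely what produces such an incompatible parity on $d(U_1(B_i),\cdot)$; the asymmetric R-B-B pattern of case $(iii)$ does not, which is exactly how the real structure distinguishes case $(iv)$ from the admissible case $(iii)$.
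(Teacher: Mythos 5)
Your setup is sound and partially overlaps with the paper's: you decone at a line through $X$, invoke Theorem~\ref{thm:resbanAomoto} for the diagonal element, and correctly observe that the three surviving lines of $\overline{\A}_X$ form an isolated parallel family whose only bands are $B_1,B_2$, so that the resonant-band representative restricted to these three lines reads $(r_{B_1},\,r_{B_1}+r_{B_2},\,r_{B_2})$. Your elimination of the option $(0,1,0)$ is correct, but it buys nothing new: it only says that the representative produced by $\Psi$ carries the complementary coloring at the parallel family, i.e.\ $r_{B_1}=r_{B_2}=1$, which is exactly the starting point the paper reaches directly by deconing at a \emph{black} line of $\overline{\A}_X$ (so that the parallel family is R--B--R and both $[B_1]$ and $[B_2]$ must appear in $\Psi^{-1}(e(\scS))$). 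The entire content of the theorem is still ahead of you at that point, and this is where your proposal has a genuine gap.

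Your plan for ruling out $r_{B_1}=r_{B_2}=1$ -- ``a further constraint from the kernel condition at a carefully chosen chamber, whose distance parity forces $r_{B_i}=0$'' -- is not substantiated, and no such single-deconing, single-chamber parity obstruction exists: $e(\A'\setminus\scS')$ is a perfectly legitimate cocycle representative, and locally nothing contradicts $r_{B_1}=r_{B_2}=1$. What the kernel condition $\widetilde{\nabla}_{\eta_0}(\sum r_B[B])=0$ actually yields, via the cancellation argument at a chamber $C\subset B_2$ with $d(U_1(B_2),C)=1$ (such a $C$ lies in at most two bands, so the second band must appear in the sum and contribute its wall $H_{i_0}$ to the coloring), is only the \emph{positive} statement that every line crossing $B_2$ adjacent to its unbounded ends belongs to $\scS$. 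To get a contradiction the paper must run this argument \emph{twice}: once in $\dec_{\widetilde H_0}\widetilde\A$ with $\scS$ (concluding those lines are red), and once in $\dec_{\widetilde H_1}\widetilde\A$ with the complementary set $\A\setminus\scS$ (concluding the same lines, for the same band viewed in $\R\bP^2$, are black). The alternating pattern of case $(iv)$ is used precisely to guarantee that both deconings produce a length-$3$ parallel family with the band $B_2=B_2'$ in common. This two-deconing comparison is the essential idea, and it is absent from your proposal.
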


\begin{proof}
Suppose that there exists $X\in\R\bP^2$ such that 
$\overline{\A}_X=\{\overline{H}_0, \overline{H}_1, \overline{H}_2, 
\overline{H}_3\}$ with $\overline{\scS}_X=\{\overline{H}_1, \overline{H}_3\}$ 
arranging as $(iv)$ in Figure \ref{fig:4cases}. 

First consider the deconing with respect to $\overline{H}_0$, we have 
$\A=\dec_{\widetilde{H}_0}\widetilde{\A}=\{H_1, \dots, H_n\}$. 
\begin{figure}[htbp]
\begin{picture}(360,170)(0,0)
\thicklines

\color{red}
\put(34,162){$H_1$}
\put(40,10){\line(0,1){150}}

\put(114,162){$H_3$}
\put(120,10){\line(0,1){150}}

\put(130,40){\line(-2,1){110}}

\put(132,50){$H_{i_0}$}
\put(130,55){\line(-6,-1){110}}

\put(130,135){\line(-6,-1){110}}
\normalcolor

\put(74,162){$H_2$}
\put(80,10){\line(0,1){150}}

\put(130,75){\line(-3,1){110}}
\put(130,100){\line(-1,0){110}}

{\small 
\put(44,15){$U_1(B_1)$}
\put(84,15){$U_1(B_2)$}
\put(44,145){$U_2(B_1)$}
\put(84,145){$U_2(B_2)$}
}
{\footnotesize
\put(83,51){$C$}
}

\put(234,162){$H_2'$}
\put(240,10){\line(0,1){150}}

\put(314,162){$H_0'$}
\put(320,10){\line(0,1){150}}

\color{red}
\put(274,162){$H_3'$}
\put(280,10){\line(0,1){150}}

\put(330,65){\line(-3,1){110}}
\put(330,110){\line(-1,0){110}}
\normalcolor

\put(330,25){\line(-3,1){110}}
\put(330,55){\line(-6,-1){110}}
\put(330,135){\line(-6,-1){110}}

{\small 
\put(244,15){$U_1(B_2')$}
\put(284,15){$U_1(B_3')$}
\put(244,145){$U_2(B_2')$}
\put(284,145){$U_2(B_3')$}
}
{\footnotesize
\put(243,42.5){$C'$}
}

\end{picture}
      \caption{Deconings $\dec_{\widetilde{H}_0}\widetilde{\A}$ 
and $\dec_{\widetilde{H}_1}\widetilde{\A}$}
\label{fig:deconings}
\end{figure}
Then $\scS=\{H_1, H_3, \dots\}\subset\A$. 
The lines $H_1, H_2, H_3$ are parallel 
(the left of Figure \ref{fig:deconings}) and 
determines two bands $B_1$ (bounded by $H_1$ and $H_2$) 
and $B_2$ (bounded by $H_2$ and $H_3$). 
Note that 
$e(\scS)=e_1+e_3+\dots\in A_{\bF_2}^1(\A)$. 
By the correspondence in \S\ref{subsec:subarr} (d), we have 
\[
\Psi^{-1}(e(\scS))=[B_1]+[B_2]+\dots, 
\]
in particular, both $[B_1]$ and $[B_2]$ appear. 
(Otherwise, $e_1, e_3$ can not appear.) 
On the other hand, we have the following relation 
\begin{equation}
\widetilde{\nabla}_{{\eta}_0}(\Psi^{-1}(e(\scS))=
\widetilde{\nabla}_{{\eta}_0}([B_1])+
\widetilde{\nabla}_{{\eta}_0}([B_2])+\dots=0. 
\end{equation}
Choose a chamber $C$ such that $C\subset B_2$ and 
$d(U_1(B_2), C)=1$. Let $\Sep(U_1(B_2), C)=\{H_{i_0}\}$. 
The chamber $C$ is adjacent to an unbounded chamber $U_1(B_2)$, 
hence, $C$ is contained in at most two bands $B_2$ and $B_{j_0}$. 
Since $\widetilde{\nabla}_{\widetilde{\eta}}([B_2])=
[C]+\dots\in\bF_2[\RB_{\widetilde{\eta}}(\A)]$, to be $(ii)$ true, 
$[C]$ must be cancelled by another resonant band $B_{j_0}$ which 
appears in $\Psi^{-1}(e(\scS))$. Thus we have 
$\Psi^{-1}(e(\scS))=[B_1]+[B_2]+\dots+[B_{j_0}]+\dots$. 
Let $H_{i_0}$ and $H_{i_0+1}$ be walls of $B_{j_0}$. Then applying 
$\Psi$, we have 
\[
\begin{split}
e(\scS)&=(e_1+e_2)+(e_2+e_3)+\dots+(e_{i_0}+e_{i_0+1})+\dots\\
&=e_1+e_3+\dots+e_{i_0}+\dots. 
\end{split}
\]
Here note that $e_{i_0}$ survives because $B_{j_0}$ is the 
only band which has $H_{i_0}$ as a wall. This implies $H_{i_0}\in\scS$. 
Therefore, if $C\subset B_2$ and 
$d(U_1(B_2), C)=1$, then $\Sep(U_1(B_2), C)\subset\scS$. 
(Left hand side of Figure \ref{fig:deconings}.) 
The same assertion holds for the opposite unbounded chamber 
$U_2(B_2)$. 

Next we consider $\overline{\scS}':=\overline{\A}\setminus
\overline{\scS}$. Since $\widetilde{e}(\widetilde{\scS}')=
\widetilde{\eta}_0+\widetilde{e}(\widetilde{\scS})$, 
$\widetilde{\eta}_0\wedge\widetilde{e}(\widetilde{\scS}')=0$. 
In Figure \ref{fig:4cases} $(iv)$, 
the role of black and red lines exchange. Black lines are 
the member of $\overline{\scS}'$ and red lines are not. 
We take deconing with respect to $\widetilde{H}_1$, we have 
$\dec_{\widetilde{H}_1}\widetilde{\A}=\{H_0', H_2', H_3', \dots, H_n'\}$ 
(Right hand side of Figure \ref{fig:deconings}). 
Then 
$\scS'=\{H_0', H_2', \dots\}\subset\dec_{\widetilde{H}_1}\widetilde{\A}$.  
The lines $H_0', H_2', H_3'$ are parallel and 
determines two bands $B_2'$ (bounded by $H_2'$ and $H_3'$) 
and $B_3'$ (bounded by $H_3'$ and $H_0'$). 
By a similar argument to the previous case (deconing with respect to 
$\widetilde{H}_0$), we can conclude that 
if $C'\subset B_2'$ and 
$d(U_1(B_2'), C')=1$, then $\Sep(U_1(B_2'), C')\subset\scS'$. 
(Right hand side of Figure \ref{fig:deconings}.) 
The same assertion holds for the opposite unbounded chamber 
$U_2(B_1')$. 

The bands $B_2$ and $B_2'$ are identical in the projective 
plane $\R\bP^2$. However, the colors of boundaries of 
unbounded chambers are different. This is a contradiction. 
Thus the case $(iv)$ can not happen. 
\end{proof}

\subsection{Real $4$-nets do not exist}
\label{subsec:4net}

\begin{Theorem}
\label{theo:nonexist4net} 
There does not exist a real arrangement $\overline{\A}$ that 
supports a $4$-net structure.
\end{Theorem}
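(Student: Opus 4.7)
The plan is to derive a contradiction at any single base point of a purported real $4$-net by applying the Non-Separation Theorem \ref{thm:nonsep} to three different two-class unions.

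Assume $\overline{\A}=\overline{\A}_1\sqcup\overline{\A}_2\sqcup\overline{\A}_3\sqcup\overline{\A}_4$ is a real $4$-net with base locus $\mathcal{X}$. Since a $(4,d)$-net has $n_p=1$ at every $p\in\mathcal{X}$, each base point has multiplicity exactly $4$ and is met by exactly one line $L_i^{(p)}\in\overline{\A}_i$ from each of the four classes. I will fix once and for all a point $p\in\mathcal{X}$ and denote the four lines by $L_1,L_2,L_3,L_4$, appearing in some cyclic order around $p$ in $\R\bP^2$.

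Next, I invoke Example \ref{ex:partition}: the three subarrangements
$\widetilde{\scS}^{(j)}:=\widetilde{\A}_1\cup\widetilde{\A}_j$ for $j=2,3,4$
all satisfy $\widetilde{\eta}_0\wedge\widetilde{e}(\widetilde{\scS}^{(j)})=0$ over $\bF_2$. Since $|\overline{\A}|=4d$, deconing yields an affine arrangement of odd cardinality $4d-1$, so the setup of \S\ref{subsec:diag} applies. For each $j\in\{2,3,4\}$, at the point $p$ we have $|\overline{\A}_p|=4$ and $|\overline{\scS}^{(j)}_p|=|\{L_1,L_j\}|=2$. Hence Theorem \ref{thm:nonsep} applies and rules out case $(iv)$: the two red lines $L_1,L_j$ must be \emph{adjacent} in the cyclic order around $p$.

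Running this with $j=2,3,4$ forces $L_1$ to be cyclically adjacent to each of $L_2$, $L_3$, $L_4$ simultaneously. However in any cyclic ordering of four elements, a given element has exactly two neighbors; it cannot be adjacent to three other elements. This is the desired contradiction, and it already happens at a single base point, so no real $4$-net can exist.

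The substantive input is entirely the Non-Separation Theorem; the remaining step is just the pigeonhole observation about cyclic orderings of four elements, so no further obstacle is expected. The only things to double-check are that (a) each pair class union $\widetilde{\A}_1\cup\widetilde{\A}_j$ genuinely provides a cocycle (guaranteed by Example \ref{ex:partition}), and (b) the hypothesis of Theorem \ref{thm:nonsep} on the parity of $|\overline{\A}|$ is met after deconing (which it is, since $|\overline{\A}|=4d$ is even). With these in hand, the argument concludes in a few lines.
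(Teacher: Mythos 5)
Your proposal is correct and follows essentially the same route as the paper: both obtain cocycles $\widetilde{e}(\widetilde{\A}_1\cup\widetilde{\A}_j)$ from Example \ref{ex:partition} and derive a contradiction with the Non-Separation Theorem \ref{thm:nonsep} at a single multiplicity-$4$ base point. The only cosmetic difference is that the paper relabels the classes so that the separated pair at $X$ is $\{\overline{H}_1,\overline{H}_3\}$ and applies the theorem once, whereas you apply it to all three pairs containing $L_1$ and conclude by the cyclic-adjacency pigeonhole; these are the same argument.
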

\begin{proof} 
Suppose $\overline{\A}$ supports a $4$-net structure with partition 
$\overline{\A}=\overline{\A}_1\sqcup\overline{\A}_2\sqcup
\overline{\A}_3\sqcup\overline{\A}_4$. 
There exists a multiple point $X\in\R\bP^2$ 
of $\overline{\A}$ with 
multiplicity $4$ such that $X$ 
is the intersection point of $4$ lines $H_i\in\A_i$. 
Suppose that the lines are ordered like in Figure \ref{4multpointarrfig}. 

\begin{figure}[htbp]
\begin{picture}(90,70)(0,0)
\thicklines

\put(10,0){\line(1,1){70}}
\put(10,35){\line(1,0){70}}
\put(45,0){\line(0,1){70}}
\put(80,0){\line(-1,1){70}}

\put(45,35){\circle*{4}}
\put(36,45){{\small $X$}}

{\small 
\put(4,10){$\overline{H}_2$}
\put(70,10){$\overline{H}_4$}
\put(5,38){$\overline{H}_1$}
\put(46,0){$\overline{H}_3$}
}
\end{picture}
      \caption{Local structure of a $4$-net.}
\label{4multpointarrfig}
\end{figure}
We can now define $\widetilde{\scS}=\widetilde{\A}_1\sqcup\widetilde{\A}_3$. 
Then as in Example \ref{ex:partition}, we have 
$\widetilde{\eta}_0\wedge\widetilde{e}(\widetilde{\scS})=0$. 
By definition, $\overline{\scS}_X=\{\overline{H}_1, \overline{H}_3\}$ 
consists of two lines and separated by the other two lines 
$\overline{H}_2$ and $\overline{H}_4$. Therefore $(iv)$ in 
Figure \ref{fig:4cases} happens. 
This contradicts the 
Non-separation 
Theorem \ref{thm:nonsep}. 
\end{proof}

\begin{Remark}
\label{rem:pseudo}
The non-existence of real $4$-nets was proved in \cite[Lem. 2.4]{cor-for}. 
Their proof relies on the metric structure of $\bR^2$. So it is not applied to 
oritented matroids. 
Our arguments actually prove that there do not exist 
rank $3$ oriented matroids 
(equivalently, pseudo-line arrangements in $\bR\bP^2$) which have 
$4$-net structures. The details are omitted. 
\end{Remark}

\medskip

\noindent
\textbf{Acknowledgments:} 
The authors thank Alex Suciu for explaining the ideas 
of the preprint \cite{ps-mod}. The authors also thank Mike Falk and Mustafa Hakan Gunturkun 
for telling the reference \cite{cor-for}. During the preparation of this paper, 
the authors were partially supported by 
JSPS Postdoctoral Fellowship For Foreign Researchers, 
the Grant-in-Aid for Scientific Research (C) 25400060, and 
the Grant-in-aid
(No. 23224001 (S)) for Scientific Research, JSPS.

\end{document}